\title{\sc Low Frequency Asymptotics for Time-Harmonic Generalized Maxwell Equations in Nonsmooth Exterior Domains}
\def\shorttitle{Low Frequency Asymptotics for Maxwell's Equations}
\def\pauthor{Dirk Pauly}
\def\mylabelonoff{off}
\def\allowdisbrk{no}
\author{{\sf\pauthor}}
\markboth{\pauthor}{\shorttitle}
\numberwithin{equation}{section}
\newenvironment{acknow}{{\vspace*{1cm}\noindent\bf Acknowledgements }}{}
\newcommand{\bewboxw}{\mbox{}\hfill $\square$ \\}
\newenvironment{proof}{{\noindent\bf Proof }}{\bewboxw}
\newcommand{\keywords}[1]{{\noindent\bf Key Words }#1}
\newcommand{\amsclass}[1]{{\noindent\bf AMS MSC-Classifications }#1}
\newcommand{\mylabel}[1]{\label{#1}\fbox{{\rm #1}}}}{\newcommand{\mylabel}[1]{\label{#1}\makebox[0mm][]{}}}
\newcommand{\paper}[7]{\bibitem{#1} #2, `#3', {\it #4}, #5, (#6), #7.}
\newcommand{\book}[6]{\bibitem{#1} #2, {\it #3}, #4, #5, (#6).}
\newcommand{\dissavail}[6]{\bibitem{#1} #2, `#3', {\sf Dissertation}, #4, (#5), available from {\tt #6}.}
\newcommand{\habil}[5]{\bibitem{#1} #2, `#3', {\sf Habilitationsschrift}, #4, (#5).}
\newcommand{\schluss}{\ifodd\value{page}\newpage\thispagestyle{empty}\makebox[0mm][]{}\color{sehrhell}.\fi

\newcommand{\xgn}{\gamma_n}
\newcommand{\xxrq}{\cR^q}
\newcommand{\xxrqpe}{\cR^{q+1}}
\newcommand{\El}{E_\lambda}
\newcommand{\Loesom}{\cS_\omega}
\newcommand{\pEms}{\tilde{E}_m}
\newcommand{\pHms}{\tilde{H}_m}
\newcommand{\EHms}{(\pEms,\pHms)}
\newcommand{\Fms}{\tilde{F}_m}
\newcommand{\Gms}{\tilde{G}_m}
\newcommand{\FGms}{(\Fms,\Gms)}

\newcommand{\paulydissregsatzausseneins}{\cite[Satz 3.6]{paulydiss}}
\newcommand{\paulydissregkorausseneins}{\cite[Korollar 3.8 (i)]{paulydiss}}
\newcommand{\paulydisskapvier}{\cite[Kapitel 4]{paulydiss}}
\newcommand{\paulydisskorsiebenfuenf}{\cite[Korollar 7.5]{paulydiss}}

\newcommand{\paulystaticsecgenstatic}{\cite[section 4]{paulystatic}}
\newcommand{\paulystaticgenstatictheo}{\cite[Theorem 4.6]{paulystatic}}
\newcommand{\paulystaticintdirichlet}{\cite[Lemma 3.8]{paulystatic}}
\newcommand{\paulystaticstatloesinhom}{\cite[Theorem 6.1, Remark 6.2]{paulystatic}}

\newcommand{\paulydecotrivdeco}{\cite[Lemma 5.1]{paulydeco}}
\newcommand{\paulydecodecoknh}{\cite[Theorem 3.2 (iv)]{paulydeco}}

\begin{document}

\date{2006}
\maketitle{}

\begin{abstract}
We discuss the radiation problem of total reflection for a time-harmonic generalized Maxwell system
in a nonsmooth exterior domain $\Omega\subset\mathbb{R}^N$\,, $N\geq3$\,, with nonsmooth inhomogeneous, anisotropic coefficients
converging near infinity with a rate $r^{-\tau}$\,, $\tau>1$\,, towards the identity.
By means of the limiting absorption principle a Fredholm alternative holds true and the
eigensolutions decay polynomially resp. exponentially at infinity.
We prove that the corresponding eigenvalues do not accumulate even at zero.
Then we show the convergence of the time-harmonic solutions to a solution of an
electro-magneto static Maxwell system as the frequency tends to zero.
Finally we are able to generalize these results easily to the corresponding Maxwell system with inhomogeneous boundary data.
This paper is thought of as the first and introductory one in a series of three papers,
which will completely discover the low frequency behavior of the solutions
of the time-harmonic Maxwell equations.\\
\keywords{Maxwell's equations, exterior boundary value problems, radiating solutions,
polynomial and exponential decay of eigensolutions, variable coefficients, electro-magneto static,
electro-magnetic theory, low frequency asymptotics, inhomogeneous boundary data}\\
\amsclass{35Q60, 78A25, 78A30}
\end{abstract}

\tableofcontents

\section{Introduction}

If we choose a time-harmonic ansatz (resp. Fourier transform with respect to time) for the classical time dependent Maxwell system in $\rd$
\begin{align*}
-\curl\mathbf{H}+\p_t\mathbf{D}&=\mathbf{I}&&,&
\curl\mathbf{E}+\p_t\mathbf{B}&=\mathbf{0}&&,\\
\pdiv\mathbf{D}&=\mbox{\boldmath$\rho$}&&,&
\pdiv\mathbf{B}&=\mathbf{0}&&,
\end{align*}
we are led to consider the time-harmonic Maxwell system with non zero complex frequency $\omega$
and complex valued data $\eps$\,, $\mu$\,, $I$ and $\rho$
\begin{align}
-\curl H+\ie\omega\eps E&=I&&,&
\curl E+\ie\omega\mu H&=0&&,\mylabel{Maxglcurl}\\
\pdiv\eps E&=\rho&&,&
\pdiv\mu H&=0&&.\mylabel{Maxgldiv}
\end{align}
This ansatz may be justified by the principle of limiting amplitude introduced by Eidus in \cite{eiduslampl}.
Here we denote the electric resp. magnetic field by $E$ resp. $H$\,, the displacement current resp. magnetic induction by
$D=\eps E$ resp. $B=\mu H$ and the current resp. charge density by $I$ resp. $\rho$\,.
The matrix valued functions $\eps$ and $\mu$ are assumed to be time independent and describe material properties,
i.e. the dielectricity and permeability of the medium. $\curl=\nabla\times\,$ (rotation) and $\pdiv=\nabla\,\cdot\,$ (divergence)
mark the usual differential operators from classical vector analysis.
By differentiation we get
$$\pdiv\eps E=-\frac{\ie}{\omega}\pdiv I\qqtext{,}\pdiv\mu H=0$$
from \eqref{Maxglcurl}, such that we can neglect (for $\omega\neq0$) the equations \eqref{Maxgldiv}.
To formulate these equations as a boundary value problem in
a domain $\Omega\subset\rd$ we need a boundary condition at $\p\Omega$\,.
Modeling total reflection of the electric field at the boundary, i.e. $\rN\ohne\Omega$ is a perfect conductor,
we impose the homogeneous boundary condition (assuming sufficient smoothness of the
boundary for the purpose of these introductory remarks)
\beq\nu\times E=0\qqtext{on}\p\Omega\qquad,\mylabel{MaxRandBed}\eeq
which means that $E$ possesses vanishing tangential components at $\p\Omega$\,.
Here $\nu$ denotes the outward unit normal on $\p\Omega$ and $\times$ the vector product in $\rd$\,. We are interested in the case of
an exterior domain $\Omega$\,, i.e. a connected open set with compact complement.
Therefore we have to impose an additional condition like
\beq\xi\times H+E\,,\,\xi\times E-H=o(r^\me)\mylabel{MaxStrahlBed}\eeq
\big($\xi(x):=x/|x|$\,, $r(x):=|x|$\big)
the classical so called outgoing Silver-M\"uller radiation condition, which allows to separate outgoing from incoming waves.
Interchanging $+$ and $-$ in \eqref{MaxStrahlBed} would yield incoming waves. We call the problem of finding $E$ and $H$
with \eqref{Maxglcurl}, \eqref{MaxRandBed} and \eqref{MaxStrahlBed} the radiation problem of total reflection for the time-harmonic
Maxwell system.

In 1952 Hermann Weyl \cite{weyl} suggests a generalization of the system \eqref{Maxglcurl} and \eqref{MaxRandBed} on Riemannian manifolds $\Omega$
of arbitrary dimension $N$ with the aid of alternating differential forms. If $E$ is a form of rank $q$ ($q$-form)
and $H$ a $(q+1)$-form and if we denote the exterior differential $\pd$
resp. the codifferential $\delta$ \big(acting on $q$- resp. $(q+1)$-forms\big) by
$$\rot:=\pd\qqtext{resp.}\pdiv:=\delta=(-1)^{qN}*\pd*$$
to remind of the electro-magnetic background
($*$: Hodge star-operator), the generalization of our system \eqref{Maxglcurl} and \eqref{MaxRandBed} reads
\begin{align}
\pdiv H+\ie\omega\eps E&=F&&,&\rot E+\ie\omega\mu H&=G&&,\mylabel{Maxglrotdiv}\\
\iota^* E&=0\mylabel{MaxRandBedz}
\end{align}
and we call it the generalized time-harmonic Maxwell system of total reflection.
Now $F$ (former $I$) is a $q$-form, $G$ (former $0$) a $(q+1)$-form,
$\eps$ resp. $\mu$ a linear transformation on $q$- resp. $(q+1)$-forms, $\iota:\p\Omega\hookrightarrow\ol{\Omega}$ the natural
embedding and $\iota^*$ the pull-back of $\iota$\,.
In the case $N=3$ and $q=1$\,, i.e. $E$ is a $1$-form and $H$ a $2$-form, the generalized Maxwell system is
equivalent to the classical Maxwell system of a perfect conductor, since the operators $\rot=\pd$ and $\pdiv=\delta$ acting on $q$-forms
are nothing else than the classical differential operators $\curl$ and $\pdiv$ if $q=1$ resp. $\pdiv$ and $-\curl$ if $q=2$\,.
Moreover, for $N=3$ and $1$- resp. $2$-forms $E$ we observe that the boundary condition \eqref{MaxRandBedz} means in the classical language
$\nu\times E=0$ resp. $\nu\cdot E=0$ on the boundary, i.e. vanishing tangential resp. normal components of the considered fields.
We remark that another classical case is discussed by this generalization. If $N=3$ and $q=0$ resp. $q=2$\,, i.e. $E$ resp. $H$ are
scalar valued, we get the equations of linear acoustics with homogeneous Dirichlet- resp. Neumann boundary condition, because
$\rot=\pd$ resp. $\pdiv=\delta$ turns out to be the classical gradient $\nabla$ on $0$- resp. $3$-forms.
Moreover, $\rot=\pd$ resp. $\pdiv=\delta$ is the zero-mapping on $3$- resp. $0$-forms.
In the case of an exterior domain $\Omega\subset\rN$\,, which we want to treat in this paper,
we give a generalization of the radiation condition \eqref{MaxStrahlBed} later. For a short notation we introduce
the formal matrix operators
\beq M:=\zmat{0}{\pdiv}{\rot}{0}\qqtext{,}\Lambda:=\zmat{\eps}{0}{0}{\mu}\mylabel{MLambdaDEF}\eeq
acting on pairs of $q$-$(q+1)$-forms and write our problem \eqref{Maxglrotdiv}, \eqref{MaxRandBedz} easily as
\beq(M+\ie\omega\Lambda)\EH=(F,G)\qqtext{,}\iota^*E=0\qquad.\mylabel{shortproblem}\eeq
\big(For typographical reasons we write form-pairs as $\EH$\,, although the matrix calculus would expect the notation
$\zvec{E}{H}$\,.\big)

Time-harmonic exterior boundary value problems concerning the classical Maxwell equations, i.e. $N=3$ and $q=1$\,,
have been studied by M\"uller \cite{mueller} in domains with smooth boundaries and homogeneous, isotropic media, i.e. $\eps=\mu=\id$\,,
with integral equation methods and by Leis \cite{leistheo} \big(see also \cite{leisbuch}\big)
with the aid of the limiting absorption principle for media, which are inhomogeneous and anisotropic within a bounded subset of $\Omega$\,.
The generalized time-harmonic Maxwell system has been treated by Weck \cite{weckhabil} and Picard \cite{picardhabil}.

In this paper we want to discuss the time-harmonic radiation boundary value problem of total reflection for the generalized
Maxwell equations \eqref{shortproblem} in an exterior domain $\Omega$ of $\rN$ for arbitrary dimensions $N$ and ranks $q$\,.
A main goal of our investigations is to treat data $\FG$ in weighted $\lzom$-spaces and inhomogeneous, anisotropic and irregular
\big($\text{L}^\infty(\Omega)$-\big) coefficients $\eps$\,, $\mu$ converging near infinity with a rate $r^{-\tau}$\,, $\tau>0$\,, towards the identity.
\big($r(x):=|x|$ denotes the Euclidean norm in $\rN$\,.\big)
We follow in close lines the papers of Weck and Witsch \cite{linelae} and
Picard,  Weck and Witsch \cite[part 1]{xmas}, which deal with the system of generalized linear elasticity and the classical
Maxwell equations. In particular we generalize the results obtained in the second paper to arbitrary dimensions $N$ and ranks of
forms $q$\,. To present a time-harmonic solution theory we prove that for nonzero frequencies $\omega$ and data
$\FG\in\Lzqgehom\times\Lzqpegehom$\footnote{The Definitions will be supplied in section \ref{defsection}.}
and $\text{L}^\infty$-coefficients $\eps$\,, $\mu$ a Fredholm alternative holds true.
The main tool to handle irregular coefficients is a decomposition lemma, which allows
us to prove the polynomial decay of eigensolutions as well as an a-priori estimate needed to establish the validity
of the limiting absorption principle by reduction to the similar results known for the scalar Helmholtz equation.
The key to this decomposition lemma are weighted Hodge-Helmholtz decompositions, i.e. decompositions in
irrotational and solenoidal fields, in the whole space case, which have been proved in \cite{sphharm}.

The idea of the decomposition lemma is to use a well known procedure to decouple the electric and magnetic field by discussing a second
order elliptic system.
To illustrate this calculation let us look at \eqref{shortproblem} in the homogeneous case $\Lambda=\id$\,.
Applying $M-\ie\omega$ yields
\beq(M^2+\omega^2)\EH=(M-\ie\omega)\FG\qquad.\mylabel{calcsecorderz}\eeq
If we choose $F$ solenoidal, i.e. $\pdiv F=0$\,, and $G$ irrotational, i.e. $\rot G=0$\,, these properties will be
transfered to $E$\,, i.e. $\pdiv E=0$\,, and $H$\,, i.e. $\rot H=0$\,, by \eqref{shortproblem} because of
$$\pdiv\pdiv=0\qqtext{and}\rot\rot=0\qquad.$$
From $\Delta=\rot\pdiv+\pdiv\rot$\,, where the Laplacian acts on each Euclidean component,
we get $M^2\EH=(\pdiv\rot E,\rot\pdiv H)=\Delta\EH$ and
finally \eqref{calcsecorderz} turns to the (componentwise) Helmholtz equation
\beq(\Delta+\omega^2)\EH=(M-\ie\omega)\FG\qquad.\mylabel{calcsecorderd}\eeq
Armed with the polynomial decay of eigensolutions and an a-priori estimate for the solutions corresponding to non-real frequencies
(We get these solutions from the existence of a selfadjoint realization of $M$\,.) we obtain our radiating
solutions for frequencies $\omega\in\rzon$ with the method of limiting absorption invented by Eidus \cite{eidusla}
as limits of solutions for frequencies $\omega\in\czp\ohne\rz$\,.
We have to admit finite dimensional eigenspaces for certain eigenvalues but show that these possibly existing
eigenvalues do not accumulate in $\rzon$\,.
All these results can be proved by the techniques used in \cite{xmas} and for orders of decay $\tau>1$\,.
Thus we do not want to repeat them in this paper.
However, we refer the interested reader to \paulydisskapvier\, for the detailed proofs.

Proving an estimate for the solutions of the homogeneous, isotropic whole space problem
with the aid of a representation formula and studying some special convolution kernels (Hankel functions) we even can exclude 0 as an
accumulation point of eigenvalues. Thus the time-harmonic solution operator $\loesom$ is well defined on $\Lzqgehom\times\Lzqpegehom$
for small frequencies $\omega\neq0$\,.
To reach this aim we have to increase the order of decay of the coefficients $\eps-\id$\,, $\mu-\id$
to $\tau>(N+1)/2$ and assume that they are $\pc{1}$ in the outside of an arbitrarily large ball.
Assuming stronger differentiability assumptions on $\eps-\id$ and $\mu-\id$\,,
i.e. $\pc{2}$ in the outside of a ball,
we are able to show the exponential decay of eigensolutions as well. To the best of our knowledge it is an open question whether there
exist such eigenvalues in this general case. Recently under comparable stronger assumptions on the coefficients
Bauer \cite{bauerabsence} was able to prove that no eigenvalues occur in the classical case of Maxwell equations ($N=3$\,, $q=1$).
Unfortunately his methods are not applicable in our general case. It seems to be the same problem that arises trying to
prove the principle of unique continuation for the generalized Maxwell equation. In the classical case
the principle of unique continuation was shown by Leis \cite{leispef} or \cite[p. 168, Theorem 8.17]{leisbuch}.
However, in the case of homogeneous, isotropic coefficients, i.e. $\eps=\id$\,, $\mu=\id$\,,
in the outside of a ball
all components of a possible eigensolution solve the homogeneous Helmholtz equation \big(compare \eqref{calcsecorderd}\big)
near infinity and therefore by Rellich's estimate \cite{rellich} must have compact support. With the validity of the
principle of unique continuation for our Maxwell system this eigensolution must vanish.
In the general case the principle of unique continuation is valid for scalar valued $\pc{2}$-functions $\eps$\,, $\mu$
and in the classical case for matrices $\eps$\,, $\mu$ with entries in $\pc{2}$\,. (See the citation above from Leis.)

Having established the time-harmonic solution theory in section \ref{time-harmonicsection}
we approach the low frequency asymptotics of our time-harmonic solution operator. To this end first we
have to provide a static solution theory. This one is more complicated than for example the static solution theory for
Helmholtz' equation. The first reason is that for $\omega=0$ the system \eqref{Maxglrotdiv} resp. \eqref{shortproblem}, i.e.
$$\rot E=G\qqtext{,}\pdiv H=F\qquad,$$
is no longer coupled and that we have to add two more equations to determine $E$ and $H$\,, i.e.
\beq\pdiv\eps E=f\qqtext{,}\rot\mu H=g\qquad,\mylabel{Maxgladd}\eeq
which in the case $\omega\neq0$ automatically follow by differentiation from \eqref{Maxglrotdiv} as mentioned above.
\big($f=-\frac{\ie}{\omega}\pdiv F$ and $g=-\frac{\ie}{\omega}\rot G$\,, if $\pdiv F$ and $\rot G$ exist.\big)
Furthermore, we need a boundary condition for the magnetic field (form). Because $\rot=\pd$ and $\iota^*$ commute
we derive $\ie\omega\iota^*\mu H=\iota^*G$ for $\omega\neq0$ from \eqref{Maxglrotdiv}.
This suggests to impose a condition
on the term $\iota^*\mu H$ and, for example, we can choose the homogeneous boundary condition
$$\iota^*\mu H=0$$
for our magnetic field. The second reason is that this static Maxwell boundary value problem
\begin{align}
\rot E&=G&&,&\pdiv H&=F&&,\non\\
\pdiv\eps E&=f&&,&\rot\mu H&=g&&,\mylabel{staticMaxsystem}\\
\iota^* E&=0&&,&\iota^*\mu H&=0\non
\end{align}
has a nontrivial kernel $\dhqepsom{}\times\mu^\me\dH{q+1}{}{\mu^\me}(\Omega)$ consisting of harmonic Dirichlet forms.
Thus we are forced to work with orthogonality constraints on the static solutions to achieve uniqueness.
For the static system \eqref{staticMaxsystem} a solution theory was given by Kress \cite{kress} and
Picard \cite{potential} for the homogeneous, isotropic case, i.e. $\eps=\id$\,, $\mu=\id$\,,
by Picard \cite{decomposition} for the inhomogeneous, anisotropic case (Here $\eps$ and $\mu$ even are
allowed to be nonlinear transformations.) as well as by Picard \cite{boundaryelectro} for the inhomogeneous, anisotropic classical case.
For our purpose we need a result like that given by Picard in \cite{potential}.
In \cite{paulystatic} we will discuss the electro-magneto static problem with inhomogeneous,
anisotropic coefficients $\eps$\,, $\mu$ in detail.
We shortly present some of these results and introduce our static solution concept in section \ref{staticsection}.

Then in section \ref{asymsection}, the main section of this paper,
we prove the convergence of the time-harmonic solutions to a special static solution of \eqref{staticMaxsystem}.
This result generalizes the paper of Picard \cite{asymmax}, which considers the classical Maxwell equations,
to arbitrary odd dimensions $N$ and ranks $q\neq0$ as well as to coefficients and right hand side data,
which necessarily do not have to be compactly supported.
We note that similar results hold true for even dimensions. Since the complexity of the calculations increases
considerably due to the appearance of logarithmic terms in the fundamental solution (Hankel's function),
we restrict our considerations to odd dimensions.

The last section \ref{inhomo} deals with inhomogeneous boundary conditions. Using a new result from Weck \cite{wecklip},
which allows to define traces of $q$-forms on domains with Lipschitz-boundaries, we discuss the time-harmonic problem
\begin{align*}
(M+\ie\omega\Lambda)\EH&=\FG&&,&\iota^*E&=\lambda
\intertext{and the static problem}
M\EH&=\FG&&,&(\iota^*E,\iota^*\mu H)&=(\lambda,\varkappa)&&,\\
(\pdiv\eps E,\rot\mu H)&=\fg&&.
\end{align*}
It turns out that the solution theories as well as the low frequency asymptotics for these problems are easy consequences
of the results for homogeneous boundary conditions and the existence of an adequate extension operator for our traces.

Easily by the Hodge star-operator we always get the corresponding dual results, but we renounce them to shorten this paper.

Essentially this is the first part of the authors ph.~d.~thesis. Thus sometimes we only sketch or neglect some proofs
and do not mention all results obtained in \cite{paulydiss}.
To get more details on the proofs or some additional results we refer the interested reader to \cite{paulydiss}.

This paper is the first one in a series of three papers having the aim to determine the low
frequency asymptotics of the solutions of the time-harmonic Maxwell equations completely.
In the second paper \cite{paulystatic} we will discuss the corresponding electro-magneto static equations in detail and
show, how one may define powers of a static solution operator in weighted Sobolev spaces.
This allows us to write down a generalized Neumann sum, which is a good candidate for the
asymptotic series approaching the time-harmonic solutions for small frequencies.
In the third paper we finally present the complete low frequency asymptotics in the operator norm of
weighted Sobolev spaces up to arbitrary orders in powers of the frequency.

\section{Definitions and preliminaries}

We will consider an exterior domain $\Omega\subset\rN$\,, i.e. $\rN\ohne\Omega$ is compact,
as a special Riemannian manifold of dimension $3\leq N\in\nz$\,. \mylabel{defsection}

We fix a radius $r_0$ and some radii $r_n:=2^nr_0$\,, $n\in\nz$\,, such that $\rN\ohne\Omega$ is a compact subset of $U_{r_0}$\,,
the open ball with radius $r_0$ centered at the origin.
For later purpose we choose a cut-off function $\mbox{\boldmath$\eta$}$\,, such that \mylabel{paulysecdef}
\beq\mbox{\boldmath$\eta$}\in\cu(\rz,\rz)\qqtext{,}\supp\mbox{\boldmath$\eta$}\subset[1,\infty)\qqtext{,}\restr{\mbox{\boldmath$\eta$}}{[2,\infty)}=1\qquad,\mylabel{etafettdef}\eeq
and define two other cut-off functions by
\beq\hat{\eta}(t):=\mbox{\boldmath$\eta$}\big(1+\frac{t-r_1}{r_2-r_1}\big)\mylabel{etadachdef}\eeq
and
\beq\eta:=\hat{\eta}\circ r\qquad.\mylabel{etadef}\eeq
Setting $A_r:=\rN\ohne\ol{U_r}$ and $Z_{r,\tilde{r}}:=A_{r}\cap U_{\tilde{r}}$
we note $\supp\nabla\eta\subset\ol{Z_{r_1,r_2}}$\,.

Using the weight function
$$\rho:=(1+r^2)^{1/2}$$
we introduce for $m\in\nzn$ and $s\in\rz$ the weighted Sobolev spaces
\begin{align*}
\hmsom&:=\setb{u\in\Lzlocom}{\rho^{s+|\alpha|}\p^\alpha u\in\lzom\text{ for all }|\alpha|\leq m}\qquad,\\
\subset\Hmsom&:=\setb{u\in\Lzlocom}{\rho^s\p^\alpha u\in\lzom\text{ for all }|\alpha|\leq m}\qquad.
\end{align*}
Equipped with their natural norms these are clearly Hilbert spaces.
In the special cases $m=0$ or $s=0$ we also write
\begin{align*}
\hmom&:=\hom{m}{0}&&,&\Hmom&=\Hom{m}{0}&&,\\
\Lzsom&:=\hom{0}{s}=\Hom{0}{s}&&,&\lzom&=\hom{0}{0}=\Hom{0}{0}&&.
\end{align*}

In $\om$ we have a global chart, the identity, and thus naturally $\Omega$ becomes a $N$-dimensional smooth Riemannian manifold with Cartesian
coordinates $\{x_1,\dots,x_N\}$\,. For alternating differential forms of rank $q\in\zz$ ($q$-forms)
we define componentwise partial derivatives $\p^\alpha u=(\p^\alpha u_I)\pd x^I$\,, if $u=u_I\pd x^I$ (sum convention!),
where $I$ are ordered multi-indices of length $q$\,, and introduce for $m\in\nzn$ and $s\in\rz$
the weighted Sobolev spaces $\hqmsom$ resp. $\Hqmsom$ of $q$-forms.
(Clearly we use the natural componentwise norms in these Hilbert spaces.)
Again in the cases $m=0$ or $s=0$ we use the same abbreviations as in the scalar case.
Especially for $m=s=0$ and $f=f_I\pd x^I,g=g_I\pd x^I\in\Lzqom{}$ we have the scalar product
$$\skp{f}{g}_{\Lzqom{}}=\intom f\wedge*\ol{g}=\intom*\skp{f}{g}_q=\intom\skp{f}{g}_q\,d\lambda=\intom f_I\ol{g}_I\,d\lambda\qquad.$$
($\lambda$\,: Lebesgue-measure, $\skp{\,\cdot\,}{\,\cdot\,}_q$\,: pointwise scalar product, $*$\,: Hodge star-operator)

Throughout this paper we denote the exterior derivative
$\pd$ by $\rot$ and the co-derivative $\delta=\pm*\pd*$ by $\pdiv$
to remind of the electro-magnetic background. Because of Stokes' theorem and the product rule
on $\cqunom$ \big(the vector space of all smooth $q$-forms with compact support in $\om$\big)
these linear operators are formally skew adjoint to each other, i.e.
\beq\skp{\rot\Phi}{\Psi}_{\lzqpeom}=-\skp{\Phi}{\pdiv\Psi}_{\lzqom}\qquad\qquad\forall\quad\Phi,\Psi\in\cqunom\qquad,\mylabel{skewadj}\eeq
which gives rise to weak definitions of $\rot$ and $\pdiv$\,. We note that still $\rot\rot=0$\,,
$\pdiv\pdiv=0$ and $\rot\pdiv+\pdiv\rot=\Delta$ hold true in the weak sense.
Furthermore, for $s\in\rz$ we need some special weighted spaces suited for Maxwell's equations:
\begin{align*}
\rqsom&:=\setb{E\in\Lzqsom}{\rot E\in\Lzqpeom{s+1}}\\
\subset\Rqsom&:=\setb{E\in\Lzqsom}{\rot E\in\Lzqpesom}\qquad,\\
\dqsom&:=\setb{H\in\Lzqsom}{\pdiv H\in\qLzom{q-1}{s+1}}\\
\subset\Dqsom&:=\setb{H\in\Lzqsom}{\pdiv H\in\qLzom{q-1}{s}}
\end{align*}
Equipped with their natural graph norms these are all Hilbert spaces. To generalize the homogeneous
boundary condition we introduce $\ronqsom$ resp. $\Ronqsom$ as the closure of $\cqun(\Omega)$ in
the corresponding graph norm $\norm{\,\cdot\,}_{\rqsom}$ resp. $\norm{\,\cdot\,}_{\Rqsom}$\,.
Using Stokes' theorem we see that in fact the homogeneous boundary condition $\iota^*E=0$ is generalized
in these spaces. The spaces $\Rqsom$\,, $\Dqsom$ and even $\Ronqsom$ are invariant under multiplication
with bounded smooth functions $\varphi$\,, i.e. for $E\in\Rqsom$ we compute
$$\rot(\varphi E)=(\rot\varphi)\wedge E+\varphi\,\rot E\qquad.$$
A subscript $0$ at the lower left corner
indicates vanishing rotation resp. divergence, e.g. $\ronqsnom=\setb{E\in\ronqsom}{\rot E=0}$\,,
and in the special case $s=0$ we neglect the weight index, e.g. $\dqnom{}:=\dqnom{0}$\,.
If we consider the whole space, i.e. $\Omega=\rN$\,, we omit the dependence on the domain and write for example
$\rqsn:=\rqsn(\rN)$\,. For every weighted Sobolev spaces $V_t$\,, $t\in\rz$\,, we define
$$V_{<s}:=\bigcap_{t<s}V_t\qqtext{,}V_{>s}:=\bigcup_{t>s}V_t\qquad,$$
e.g. $\Dqom{<\meh}$\,. Moreover, replacing a weight index $s$ by the symbol $s=\loc$ resp. $s=\vox$ means that the forms
are locally square-integrable at infinity but square-integrable up to the boundary resp. that the forms have bounded supports.

Furthermore, the rule of partial integration \eqref{skewadj} may be generalized as follows: Using a usual cutting technique
we get for
$$\EH\in\Ronqtom\times\Dqpesom\qqtext{resp.}\EH\in\ronqtom\times\dqpesom$$
with $t,s\in\rz$ and $t+s\geq0$ resp. $t+s\geq-1$
\beq\skp{\rot E}{H}_{\lzqpeom}+\skp{E}{\pdiv H}_{\lzqom}=0\qquad.\mylabel{skewadjtwo}\eeq
Now let us introduce our transformations:

\begin{defini}\mylabel{transdefione}
Let $\tau\geq0$\,. We call a transformation $\eps$ $\tau$-{\sf admissible}, if
\begin{itemize}
\item $\eps(x)$ is a linear transformation on $q$-forms for all $x\in\om$\,,
\item $\eps$ possesses $\text{\rm L}^\infty(\Omega)$-coefficients, i.e. the matrix representation of
$\eps$ corresponding to the canonical basis \big(and then for every chart basis $\{\pd h^I\}$\big)
has $\text{\rm L}^\infty(\Omega)$-entries,
\item $\eps$ is symmetric, i.e. for all $E,H\in\lzqom$
$$\skp{\eps E}{H}_{\lzqom}=\skp{E}{\eps H}_{\lzqom}$$
holds, and uniformly positive definite, i.e.
$$\bigvee_{c>0}\bigwedge_{E\in\lzqom}\skp{\eps E}{E}_{\lzqom}\geq c\cdot\norm{E}_{\lzqom}^2\qquad,$$
\item $\eps$ is asymptotically the identity, i.e.
$\eps=\epsn\id+\hat{\eps}$ with $\epsn\in\rzp$ and $\hat{\eps}=\calO(r^{-\tau})$ as $r\to\infty$\,.
We call $\tau$ the `{\it order of decay}' of the perturbation $\hat{\eps}$\,.
\end{itemize}
\end{defini}

For some results obtained in this paper we need one more additional assumption on the perturbations
$\epsd$ of our transformations. That is $\epsd$ has to be differentiable in the outside of
an arbitrarily large ball. More precisely:

\begin{defini}\mylabel{transdefitwo}
Let $\tau\geq0$\,. We call a transformation $\eps$ $\tau$-$\pc{1}$-{\sf admissible}, if
\begin{itemize}
\item $\eps$ is $\tau$-admissible
\item and $\epsd\in\pc{1}(A_{r_0})$\,, which means that the matrix representation of $\epsd$ corresponding to the canonical basis
\big(and then for every chart basis $\{\pd h^I\}$\big) has $\pc{1}(A_{r_0})$-entries, with the additional asymptotic
$$\p_n\epsd=\calO(r^{-1-\tau})\qqtext{as}r\to\infty\qqtext{,}n=1,\dots,N\qquad.$$
\end{itemize}
\end{defini}

Moreover, we need a special property of our boundary $\p\om$\,:

\begin{defini}\mylabel{maxkompakt}
A bounded domain $\Xi$ possesses the `Maxwell compactness property', shortly {\sf MCP},
if and only if the embeddings
$$\Ronq{}(\Xi)\cap\pDq{}(\Xi)\hookrightarrow\Lzq{}(\Xi)$$
are compact for all $q$\,.
\end{defini}

The {\sf MCP} is a property of the boundary and there is a great amount of literature about the {\sf MCP}.
The first idea was to estimate the $\qH{1}{q}{}{}(\Xi)$-norm
by the $\big(\pRq{}(\Xi)\cap\pDq{}(\Xi)\big)$-norm (Gaffney's inequality) and then to use Rellich's selection theorem.
To do this one needs smooth boundaries, which, for instance, may be seen in \cite[p. 157, Theorem 8.6]{leisbuch}.
If $q=0$ we even have
$$\pR{0}{}{}{\circ}(\Xi)\cap\pDi{0}{}{}{}(\Xi)=\pR{0}{}{}{\circ}(\Xi)=\qH{1}{0}{}{\circ}(\Xi)\qquad.$$
In 1972 \cite{weckhabil,weckmax} Weck presented for the first time a proof of the {\sf MCP} for bounded manifolds with nonsmooth boundaries
(`cone-property'). More proofs of the {\sf MCP} were given by Picard \cite{comimb} (`Lipschitz-domains') and in the classical
case by Weber \cite{weber} (another `cone-property') and Witsch \cite{witsch} (`$p$-cusp-property).
A proof of the {\sf MCP} in the classical case for bounded domains handling
the largest known class of boundaries has been given by
Picard, Weck and Witsch in \cite{xmas}. They combined the techniques from \cite{weckmax,comimb,witsch}.

\begin{defini}\mylabel{maxkompaktlokal}
$\Omega$ possesses the `Maxwell local compactness property', shortly {\sf MLCP},
if and only if the embeddings
$$\Ronqom{}\cap\Dqom{}\hookrightarrow\Lzqloc(\omq)$$
are compact for all $q$\,.
\end{defini}

\begin{rem}\mylabel{maxkompaktlokalrem}
The following assertions are equivalent:
\begin{itemize}
\item[\rm\bf (i)] $\om$ possesses the {\sf MLCP}.
\item[\rm\bf (ii)] $\om\cap U_\varrho$ possesses the {\sf MCP} for all $\varrho\geq r_0$\,.
\item[\rm\bf (iii)] The embeddings
$$\Ronqsom\cap\Dqsom\hookrightarrow\Lzqtom$$
are compact for all $t,s\in\rz$ with $t<s$ and all $q$\,.
\item[\rm\bf (iv)] For all $t,s\in\rz$ with $t<s$\,, all $q$ and all $0$-admissible $\eps_q$ the embeddings
$$\Ronqsom\cap\eps_q^\me\Dqsom\hookrightarrow\Lzqtom$$
are compact.
\end{itemize}
\end{rem}

Let $\eps$ be a $0$-admissible transformation and $t\in\rz$\,.
We introduce the `{\it (weighted harmonic) Dirichlet forms}'
\beq\dhqepsom{t}:=\ronqnom{t}\cap\eps^\me\dqnom{t}\mylabel{dirichletformen}\eeq
and in the special case $\eps=\id$ we denote them by $\dH{q}{t}{}(\Omega)$\,. If $t=0$\,,
we always write $\dhqepsom{}:=\dhqepsom{0}$\,.

By the projection theorem and the $\Lzqom{}$-orthogonality of
$\ol{\rot\pR{q-1}{}{}{\circ}(\Omega)}$ and $\dqnom{}$ resp. $\ol{\pdiv\Dqpeom{}}$ and $\ronqnom{}$
as well as the inclusions $\ol{\rot\pR{q-1}{}{}{\circ}(\Omega)}\subset\ronqnom{}$ and $\ol{\pdiv\Dqpeom{}}\subset\dqnom{}$
we get the following Helmholtz decompositions:
\begin{align}
\begin{split}
\lzqom&=\ol{\rot\pR{q-1}{}{}{\circ}(\Omega)}\oplus_\eps\eps^\me\dqnom{}=\ronqnom{}\oplus_\eps\eps^\me\ol{\pdiv\Dqpeom{}}\\
&=\eps^\me\ol{\rot\pR{q-1}{}{}{\circ}(\Omega)}\oplus_\eps\dqnom{}=\eps^\me\ronqnom{}\oplus_\eps\ol{\pdiv\Dqpeom{}}\\
&=\ol{\rot\pR{q-1}{}{}{\circ}(\Omega)}\oplus_\eps\dhqepsom{}\oplus_\eps\eps^\me\ol{\pdiv\Dqpeom{}}\\
&=\eps^\me\ol{\rot\pR{q-1}{}{}{\circ}(\Omega)}\oplus_\eps\eps^\me\dH{q}{}{\eps^\me}(\Omega)\oplus_\eps\ol{\pdiv\Dqpeom{}}
\end{split}\mylabel{lzzerlegungenaussen}
\end{align}
Here all closures are taken in $\lzqom$ and we denote the $\skp{\eps\,\cdot\,}{\,\cdot\,}_{\lzqom}$-orthogonality by
$\oplus_\eps$ and put $\oplus:=\oplus_{\id}$\,. These Helmholtz decompositions may be found in
\cite[Lemma 1]{potential}, \cite[Lemma 1]{decomposition} or in the
classical case in \cite[p. 168]{boundaryelectro}, \cite[Lemma 3.13]{xmas}.

If $\om$ possesses the {\sf MLCP} and $\eps$ is $\tau$-$\pc{1}$-admissible with $\tau>0$\,,
then \paulystaticintdirichlet\, shows
\beq\dhqepsom{-\Nh}=\dhqepsom{}=\dhqepsom{<\Nh-1}\mylabel{dirichletint}\eeq
and using the Helmholtz decompositions \eqref{lzzerlegungenaussen} we easily see
$$d^q:=\dim\dhqepsom{}=\dim\dhqom{}<\infty\qquad,$$
i.e. $d^q$ depends neither on weights $-N/2\leq t<N/2-1$ nor on the transformation $\eps$\,.

Finally we define three operators
\beq R:=r\pd r\wedge\,\cdot=x_n\pd x^n\wedge\,\cdot\qtext{,}T:=(-1)^{qN}*R*\qtext{,}S:=\zmat{0}{T}{R}{0}\mylabel{RTSdef}\eeq
acting pointwise on $q$- resp. $(q+1)$- resp. pairs of $q$- and $(q+1)$-forms, which will be useful to formulate
the radiation condition. These operators correspond to $\rot$\,, $\pdiv$ and $M$ in the following way:
If $\varphi$ is a smooth function, $E$ a $q$-form with weak rotation and $H$ a $(q+1)$-form with weak divergence, then
\begin{align}
\rot\big(\varphi(r)E\big)&=\varphi(r)\rot E+\varphi'(r)r^\me RE\qquad,\non\\
\pdiv\big(\varphi(r)H\big)&=\varphi(r)\pdiv H+\varphi'(r)r^\me TH\qquad,\mylabel{rotdivcom}\\
M\big(\varphi(r)\EH\big)&=\varphi(r)M\EH+\varphi'(r)r^\me S\EH\qquad.\non
\end{align}
There is another correspondence between these operators. If we define the Fourier transformation $\calF$ on $q$-forms in $\rN$
componentwise in Euclidean coordinates, then the mapping $\calF:\lzq\to\lzq$ is unitary and the well known formulas
$$\calF(\p^\alpha E)=\ie^{|\alpha|}\id^{\alpha}\calF(E)\qqtext{,}
\p^\alpha\calF(E)=(-\ie)^{|\alpha|}\calF(\id^{\alpha}E)$$
and clearly $\calF(\Delta E)=-r^2\calF(E)$\,, $\Delta\calF(E)=-\calF(r^2E)$ hold for $q$-forms $E$\,.
By elementary calculations we get
\begin{align}
\calF\rot&=\ie R\calF&&,&\calF\pdiv&=\ie T\calF&&,&\calF M&=\ie S\calF&&,\mylabel{FourierRTSone}\\
\rot\calF&=-\ie\calF R&&,&\pdiv\calF&=-\ie\calF T&&,&M\calF&=-\ie\calF S&&.\mylabel{FourierRTStwo}
\end{align}

\section{The time-harmonic problem}\label{time-harmonicsection}

Let $\tau\geq0$ and $\eps$\,, $\mu$ be two $\tau$-admissible transformations on $q$- resp. $(q+1)$-forms
as well as $M$\,, $\Lambda$ be as in \eqref{MLambdaDEF}.
As mentioned above we want to treat the time-harmonic, inhomogeneous, anisotropic (generalized)
Maxwell equation
$$(M+\ie\omega\Lambda)\EH=\FG\qquad,$$
with frequencies
$$\omega\in\czp:=\set{z\in\cz}{\imt z\geq0}\qquad.$$
A substitution like $\tilde{x}:=\alpha x$\,, $\tilde{H}:=\beta H$ allows us to suppose w. l. o. g.
\beq\epsn=\mun=1\qqtext{and thus}\Lambda=\id+\Lambdad\qquad.\mylabel{epsnmuneins}\eeq
To shorten and simplify the formulas we always want to assume \eqref{epsnmuneins} throughout this paper.

Now let us introduce our time-harmonic solution concept. From the skewadjointness of the two operators
\begin{align*}
\rot:\Ronqom{}\subset\lzqom&\To\lzqpeom\qquad,\\
\pdiv:\Dqpeom{}\subset\lzqpeom&\To\lzqom
\end{align*}
to each other we obtain the selfadjointness of
$$\calM:\Ronqom{}\times\Dqpeom{}\subset{}_\eps\lzqom\times{}_\mu\lzqpeom\To{}_\eps\lzqom\times{}_\mu\lzqpeom$$
with
$$\calM\EH:=\ie\Lambda^\me M\EH=\ie(\eps^\me\pdiv H,\mu^\me\rot E)\qquad.$$
Here ${}_\nu\lzqom:=\lzqom$ is equipped with the scalar product $\skp{\nu\,\cdot\,}{\,\cdot\,}_{\lzqom}$\,. This suggests

\begin{defini}\mylabel{defloesc}
Let $\omega\in\cz\ohne\rz$ and $\FG\in\lzqom\times\lzqpeom$\,. Then $\EH$ solves the problem $\Max(\Lambda,\omega,F,G)$\,, if and only if
\begin{itemize}
\item[\rm\bf (i)]\qquad $\EH\in\Ronqom{}\times\Dqpeom{}$\qquad,
\item[\rm\bf (ii)]\qquad $(M+\ie\omega\Lambda)\EH=\FG$\qquad.
\end{itemize}
\end{defini}

The selfadjointness of $\calM$ yields the unique solvability of $\Max(\Lambda,\omega,F,G)$
for each frequency $\omega\in\cz\ohne\rz$ and all $\FG\in\lzqom\times\lzqpeom$\,.
We denote the continuous solution operator by
$$\loesom:=\ie(\calM-\omega)^\me\Lambda^\me\qquad.$$
It can be seen easily that the spectrum of $\calM$ is the entire real axis.
Thus we expect, e.g. from Helmholtz' equation that we have to work in weighted $\lz$-spaces and with radiating solutions
to get a solution theory for real frequencies.

\begin{defini}\mylabel{defloesr}
Let $\omega\in\rz\ohne\{0\}$ and $\FG\in\Lzqlocom\times\Lzqpelocom$\,. Then $\EH$ solves the problem $\Max(\Lambda,\omega,F,G)$\,, if and only if
\begin{itemize}
\item[\rm\bf (i)]\qquad $\EH\in\Ronqkmehom\times\Dqpekmehom$\qquad,
\item[\rm\bf (ii)]\qquad $(M+\ie\omega\Lambda)\EH=\FG$\qquad,
\item[\rm\bf (iii)]\qquad $(r^\me S+\id)\EH\in\Lzqgmeh(\Omega)\times\Lzqpegmeh(\Omega)$\qquad.
\end{itemize}
\end{defini}

\begin{rem}\mylabel{defloesrbemeins}
We call condition (iii) the `Maxwell radiation condition' or `radiation condition'.
This condition generalizes the classical ($N=3$\,, $q=1$) Silver-M�ller incoming radiation condition
for Maxwell equations \big(see \eqref{MaxStrahlBed}\big)
$$\xi\times H-E\in\Lz{>\meh}(\Omega)\qqtext{,}\xi\times E+H\in\Lz{>\meh}(\Omega)\qquad.$$
We note that the radiation condition reads
$$(r^\me TH+E,r^\me RE+H)\in\Lzqgmeh(\Omega)\times\Lzqpegmeh(\Omega)\qquad.$$
\end{rem}

Furthermore, we need

\begin{defini}\mylabel{punktspektrum}
We define
\begin{align*}
\pP&:=\setb{\omega\in\cz\ohne\{0\}}{\Max(\Lambda,\omega,0,0)\;\;\text{\rm has a nontrivial solution.}}
\intertext{and for $\omega\in\cz\ohne\{0\}$}
\calN(\Max,\Lambda,\omega)&:=\setb{\EH}{\EH\;\;\text{\rm is a solution of }\Max(\Lambda,\omega,0,0)\,.}\quad.
\end{align*}
\end{defini}

Clearly we have $\pP\subset\rzon$ and
$\calN(\Max,\Lambda,\omega)=N(\calM-\omega)=\big\{(0,0)\big\}$
for $\omega\in\cz\ohne\rz$\,.

Similar arguments like those leading to the main result of the first part of \cite{xmas} prove the
following theorem. Therefore these do not have to be repeated here.
We note that essentially we need two a priori estimates. Then the time-harmonic solutions are
obtained by the limiting absorption principle.
For details we refer the interested reader to \paulydisskapvier.

\begin{theo}\mylabel{fredholm}
Let $\tau>1$ and $\omega\in\rzon$\,.
\begin{itemize}
\item[\rm\bf (i)] For all $t\in\rz$
\begin{align*}
\calN(\Max,\Lambda,\omega)&=N(\calM-\omega)\\
&\subset\big(\Ronqtom\cap\eps^\me\Dqtnom\big)\times\big(\Dqpetom\cap\mu^\me\Ronqpetnom\big)\quad,
\end{align*}
i.e. eigensolutions decay polynomially.
\end{itemize}
Additionally let $\Omega$ have the {\sf MLCP}. Then:
\begin{itemize}
\item[\rm\bf (ii)] $\calN(\Max,\Lambda,\omega)$ is finite dimensional.
\item[\rm\bf (iii)] $\pP$ has no accumulation point in $\rzon$\,.
\item[\rm\bf (iv)] For every $\FG\in\Lzqgehom\times\Lzqpegehom$ there exists a solution $\EH$
of the problem $\Max(\Lambda,\omega,F,G)$\,, if and only if
\beq\bigwedge_{\eh\in\calN(\Max,\Lambda,\omega)}\qquad\skpb{\FG}{\eh}_{\lzqom\times\lzqpeom}=0\qquad.\mylabel{fgsenkrecht}\eeq
The solution can be chosen, such that
\beq\skpb{\Lambda\EH}{\eh}_{\lzqom\times\lzqpeom}=0\mylabel{ehsenkrecht}\eeq
holds for all $\eh\in\calN(\Max,\Lambda,\omega)$\,. By this condition $\EH$ is uniquely determined.
\item[\rm\bf (v)] The solution operator introduced in {\rm\bf (iv)},
which we will denote by $\loesom$ as well, maps
$\big(\Lzqsom\times\Lzqpesom\big)\cap\calN(\Max,\Lambda,\omega)^\bot$ to
$\big(\Ronqtom\times\Dqpetom\big)\cap\calN(\Max,\Lambda,\omega)^{\bot_\Lambda}$
continuously for all $s,-t>1/2$\,.
\end{itemize}
\end{theo}

Here we denote the orthogonality corresponding to the
$\skp{\Lambda\,\cdot\,}{\,\cdot\,}_{\lzqom\times\lzqpeom}$-scalar product by $\bot_\Lambda$ and we put $\bot:=\bot_{\id}$\,.
Moreover, using the same technique introduced by Eidus in \cite{eidussm} for the classical
Maxwell equations we get

\begin{cor}\mylabel{expdecayeigen}
Let $\tau>1$\,, $\omega\in\rzon$ and $\EH\in\calN(\Max,\Lambda,\omega)$\,.
If  additionally $(\eps,\mu)\in\cq{2}(\Xi)\times\cqpe{2}(\Xi)$ with bounded derivatives for some exterior domain
$\Xi\subset\Omega$\,, then
\begin{align*}
\exp(t\,r)\cdot\EH&\in\big(\Ronqom{}\cap\eps^\me\Dqom{}\big)\times\big(\Dqpeom{}\cap\mu^\me\Ronqpeom{}\big)\quad,\\
\exp(t\,r)\cdot\EH&\in\qH{2}{q}{}{}(\tilde{\Xi})\times\qH{2}{q+1}{}{}(\tilde{\Xi})
\end{align*}
hold for all $t\in\rz$ and for all exterior domains $\tilde{\Xi}\subset\Xi$ with $\dist(\tilde{\Xi},\p\Xi)>0$\,,
i.e. eigensolutions decay exponentially.
\end{cor}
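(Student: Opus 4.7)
The plan is to adapt Eidus's exponential decay technique from \cite{eidussm} to the present $q$-form framework. Theorem \ref{fredholm}(i) already provides polynomial decay of $\EH$ at every rate $t\in\rz$, so under the additional $\pc{2}$-hypothesis on $(\eps,\mu)$ on $\Xi$ it suffices to upgrade this to exponential decay there.

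First I would decouple the first-order system on $\Xi$. Applying $M$ to the eigenequation $(M+\ie\omega\Lambda)\EH=0$ and using $M^2=\Delta$ on Euclidean components together with the Leibniz-type relation $M(\Lambda u)=\Lambda Mu+[M,\Lambda]u$ yields on $\Xi$ a perturbed Helmholtz system
\[
(\Delta+\omega^2)u=Wu,
\]
where $u:=\EH$ and $W$ is a first-order differential operator whose coefficients are built from $\hat{\Lambda}$, $\nabla\hat{\Lambda}$ and $\omega$ and decay as $\calO(r^{-\tau})$ at infinity with $\tau>1$. The $\pc{2}$-regularity of $(\eps,\mu)$ on $\Xi$ ensures that these coefficients are at least $\pc{1}$, which is precisely what is needed to justify the integration by parts of the next step.

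Second, I would carry out a weighted energy estimate on $A_\varrho$ for $\varrho$ large. For a parameter $t>0$ and a suitable monotone weight $\phi_t(r)\to tr$, truncated at some outer radius so that multipliers of the form $e^{2\phi_t}\zeta^2\bar u$ remain square integrable, a Rellich-type identity obtained from the Helmholtz system above provides coercive bulk integrals of weighted $L^2$-norms of $u$ and $\nabla u$. The lower-order perturbation $Wu$ contributes only an $\calO(r^{-\tau})$ remainder, absorbable once $\varrho$ is chosen large. The polynomial decay of $u$ furnished by Theorem \ref{fredholm}(i) suppresses all boundary contributions as the outer radius tends to infinity. Passing to the limit and iterating in $t$ yields $\exp(tr)\cdot u\in\lzqom\times\lzqpeom$ for every $t\in\rz$.

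Third, the membership in the Maxwell spaces and the local $\qH{2}{q}{}{}$-regularity follow readily. The first-order equation $M\EH=-\ie\omega\Lambda\EH$ combined with the commutator identities \eqref{rotdivcom} transports the exponential weight to $\rot E$ and $\pdiv H$, whence $\exp(tr)\cdot\EH\in\big(\Ronqom{}\cap\eps^\me\Dqom{}\big)\times\big(\Dqpeom{}\cap\mu^\me\Ronqpeom{}\big)$. Interior elliptic regularity for the decoupled Helmholtz system with $\pc{1}$-coefficients, applied on a slightly enlarged neighborhood of $\tilde{\Xi}$ still contained in $\Xi$ (possible because $\dist(\tilde{\Xi},\p\Xi)>0$), then supplies the local second-order bound on $\tilde{\Xi}$, again with the exponential weight intact.

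The main obstacle is the weighted Rellich-type identity at the heart of Eidus's argument: the growth profile $\phi_t$ must be chosen so that the multiplier is admissible while the resulting bulk integrals are coercive, the $\calO(r^{-\tau})$ remainder is absorbable, and the boundary flux at the truncation radius is tamed by the polynomial decay. Once this estimate is in place the iteration in $t$ is routine, and because the decoupled system is componentwise scalar the generalization from $N=3$, $q=1$ to arbitrary $N$, $q$ is only cosmetic.
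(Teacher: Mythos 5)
Your overall route---decouple into a perturbed Helmholtz system near infinity and run Eidus's weighted energy estimates---is exactly what the paper intends: the paper offers no proof of this corollary beyond the remark that it follows ``using the same technique introduced by Eidus in \cite{eidussm}'', so in spirit you and the author are following the same path. There is, however, a concrete error in your first step which changes the structure of the perturbation you subsequently have to absorb.

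You assert $M^2=\Delta$ on Euclidean components. This is false: $M^2\EH=(\pdiv\rot E,\rot\pdiv H)$ while $\Delta=\rot\pdiv+\pdiv\rot$, so $\Delta=M^2+\Box$ with $\Box=\zmat{\rot\pdiv}{0}{0}{\pdiv\rot}$ (the paper records exactly this in \eqref{transformFGfg}, and the identity $M^2\EH=\Delta\EH$ from the introduction is derived only under the constraints $\pdiv E=0$, $\rot H=0$). For an eigensolution with variable $\Lambda$ one only has $\pdiv\eps E=0$ and $\rot\mu H=0$, hence $\rot\pdiv E=-\rot\pdiv(\epsd E)$ and $\pdiv\rot H=-\pdiv\rot(\mud H)$. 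Consequently the decoupled system is not $(\Delta+\omega^2)u=Wu$ with $W$ of first order: the right-hand side contains the genuinely second-order terms $\rot\pdiv(\epsd E)$ and $\pdiv\rot(\mud H)$, whose leading coefficients are $\calO(r^{-\tau})$ and whose lower-order coefficients involve $\nabla\epsd$ and $\nabla^2\epsd$ --- this is where the $\pc{2}$-hypothesis with bounded derivatives is actually consumed, not merely in making first-order coefficients $\pc{1}$. Your absorption step must therefore handle a small second-order perturbation of the principal part after conjugation with $e^{\phi_t}$ (terms of size $t^2r^{-\tau}|u|^2$ and $t\,r^{-\tau}|\nabla u|\,|u|$ competing with the coercive bulk integrals), which still succeeds for $\varrho$ large because $r^{-\tau}\to0$, but it is a more delicate estimate than absorbing a decaying first-order operator and it is the step your sketch silently skips. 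With that correction, the remainder of your plan (boundary terms killed by the polynomial decay of Theorem \ref{fredholm} (i), iteration in $t$, transport of the weight via \eqref{rotdivcom}, interior elliptic regularity on $\tilde{\Xi}$) is the standard Eidus argument and matches the paper's intended proof.
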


\begin{rem}\mylabel{polynomialdecayrem}
The polynomial resp. exponential decay of eigensolutions holds for arbitrary exterior domains $\Omega$\,,
i.e. $\Omega$ does not need to have the {\sf MLCP}.
\end{rem}

\begin{rem}\mylabel{bemfredholmRellich}
If the media are homogeneous and isotropic in the outside of some ball,
i.e. $\supp\Lambdad\cup(\rN\ohne\Omega)\subset U_{\rho}$ for some $\rho>0$\,,
then
$$\supp\EH\subset\ol{\Omega\cap U_{\rho}}$$
for all $\omega\in\rzon$ and $\EH\in\calN(\Max,\Lambda,\omega)$\,, since in this case $\EH$ solves Helmholtz' equation
$$(\Delta+\omega^2)\EH=(0,0)$$
in $A_{\rho}$ and therefore by Rellich's estimate \big(\cite{rellich} or \cite[p. 59]{leisbuch}\big) must vanish in $A_{\rho}$\,.
If the principle of unique continuation holds for our Maxwell system, then
$$\calN(\Max,\Lambda,\omega)=\big\{(0,0)\big\}\qquad.$$
\end{rem}

Moreover, using the a priori estimate of the limiting absorption principle and some indirect arguments
followed by the (trivial) decomposition of $\Lzqsom$ from \paulydecotrivdeco\,
we are able to prove stronger estimates for the solution operator $\loesom$ as the ones given in Theorem \ref{fredholm} (v).

\begin{cor}\mylabel{loesomabsch}
Let $\tau>1$\,, $s,-t>1/2$ and $K\Subset\cz_+\ohne\{0\}$ with $\ol{K}\cap\pP=\emptyset$ as well as $\Omega$ have the {\sf MLCP}. Then
\begin{itemize}
\item[\bf(i)] there exist constants $c>0$ and $\hat{t}>-1/2$\,, such that the estimate
$$\normb{\loesom\FG}_{\Rqtom\times\Dqpetom}+\normb{(r^\me S+\id)\loesom\FG}_{\Lzqom{\hat{t}}\times\Lzqpeom{\hat{t}}}$$
$$\leq c\cdot\normb{\FG}_{\Lzqsom\times\Lzqpesom}$$
holds true for all $\omega\in\ol{K}$ and $\FG\in\Lzqsom\times\Lzqpesom$\,. Especially the operator
$$\loesom\,:\,\Lzqsom\times\Lzqpesom\,\To\,\Ronqtom\times\Dqpetom$$
is equicontinuous w.~r.~t. $\omega\in\ol{K}$\,;
\item[\bf(ii)] the mapping
$$\Abb{\loes}{\ol{K}}{B\big(\Lzqsom\times\Lzqpesom,\Ronqtom\times\Dqpetom\big)}{\omega}{\loesom}$$
is (uniformly) continuous. \big(Here we denote the bounded linear operators from some normed space $X$
to some normed space $Y$ by $B(X,Y)$\,.\big)
\end{itemize}
\end{cor}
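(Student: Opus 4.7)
The plan is to prove (i) by a standard contradiction argument built on the {\sf MLCP}, the a priori estimate already used for Theorem \ref{fredholm} (which for each fixed $\omega\in\rzon\ohne\pP$ yields the estimate but a priori with an $\omega$-dependent constant), and the trivial decomposition \paulydecotrivdeco. Suppose (i) fails; then for every $c>0$ and every candidate $\hat t>-1/2$ one can find sequences $\omega_n\in\ol{K}$ and $\FG_n\in\Lzqsom\times\Lzqpesom$ of norm $1$ such that, setting $\EH_n:=\loesom[\omega_n]\FG_n$, the left-hand side $N_n$ of the target estimate tends to infinity. Renormalize by dividing by $N_n$, replacing $\EH_n$ by $\EH_n/N_n$ and $\FG_n$ by $\FG_n/N_n$, so that the renormalized solutions have target-norm equal to $1$ while the data tend to zero in $\Lzqsom\times\Lzqpesom$. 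By compactness of $\ol{K}$ we may pass to a subsequence with $\omega_n\to\omega_0\in\ol{K}$, and $\omega_0\notin\pP$ by hypothesis.

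The next step is to extract a strong limit $\EH$ of $\EH_n$. The {\sf MLCP} gives precompactness of $\EH_n$ in $\Lzqloc(\ol{\Omega})\times\Lzqpeloc(\ol{\Omega})$, hence local strong convergence. To upgrade this to convergence in $\Rqtom\times\Dqpetom$, I invoke the trivial decomposition \paulydecotrivdeco, splitting $\EH_n=\eta\EH_n+(1-\eta)\EH_n$ with the cut-off $\eta$ from \eqref{etadef}. The piece $(1-\eta)\EH_n$ has support in a bounded set and converges strongly via the {\sf MLCP}; for the far-field piece $\eta\EH_n$, the uniform bound on $(r^\me S+\id)\EH_n$ in $\Lzqom{\hat t}\times\Lzqpeom{\hat t}$ combined with the product identities \eqref{rotdivcom} and the equation $(M+\ie\omega_n\Lambda)\EH_n=\FG_n\to0$ converts the problem to a Helmholtz-type resolvent problem whose radiating solution admits the known weighted a priori bound, which turns weak into strong convergence. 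The orthogonality \eqref{ehsenkrecht}, inherited by each $\EH_n$ from $\loesom[\omega_n]$, is preserved under the limit, and $\EH$ satisfies $(M+\ie\omega_0\Lambda)\EH=0$ together with the radiation condition, so $\EH\in\calN(\Max,\Lambda,\omega_0)$ and simultaneously $\EH\bot_\Lambda\calN(\Max,\Lambda,\omega_0)$. Hence $\EH=0$, contradicting that the target-norm $1$ passes to the limit.

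The hard part is ensuring that the radiation residual converges with the same $\hat t>-1/2$ uniformly in $\omega\in\ol{K}$. The freedom in the choice of $\hat t$ is provided by the a priori estimate underlying Theorem \ref{fredholm}: that estimate already couples the $\Rqtom\times\Dqpetom$-norm with some weighted bound on $(r^\me S+\id)\EH$, and the cut-off/Helmholtz reduction indicated above shows that this $\hat t$ can be chosen once and for all on $\ol{K}$. Care is needed to treat the $\Lambdad$-term, where $\tau>1$ is crucial to obtain enough weighted decay of $\Lambdad\EH_n$ after multiplication by $\eta$.

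Assertion (ii) follows by an analogous indirect argument, reusing (i). If the map were not continuous at some $\omega_0\in\ol{K}$, one would obtain $\omega_n\to\omega_0$ and unit-norm data $\FG_n\in\Lzqsom\times\Lzqpesom$ with $\norm{w_n}_{\text{target}}\geq\delta>0$, where $w_n:=(\loesom[\omega_n]-\loesom[\omega_0])\FG_n$. Subtracting the two Maxwell equations, $w_n$ solves $(M+\ie\omega_n\Lambda)w_n=-\ie(\omega_n-\omega_0)\Lambda\loesom[\omega_0]\FG_n$, whose right-hand side tends to zero in the relevant weighted norm (using $\tau>1$ on the $\Lambdad$-contribution and the uniform target-norm bound from (i) localized via the trivial decomposition on the $\id$-contribution). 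The same compactness mechanism as in (i), applied now to $w_n$ directly, forces $w_n\to 0$ in the target norm along a subsequence, contradicting $\norm{w_n}\geq\delta$. Compactness of $\ol{K}$ then upgrades pointwise continuity to uniform continuity.
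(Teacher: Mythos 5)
Your proposal follows exactly the route the paper indicates for this corollary: the a priori estimate of the limiting absorption principle underlying Theorem \ref{fredholm}, an indirect compactness argument via the {\sf MLCP} (where the nullspaces $\calN(\Max,\Lambda,\omega)$ are all trivial on $\ol{K}$ because $\ol{K}\cap\pP=\emptyset$), and the trivial decomposition \paulydecotrivdeco\, to deal with the non-decaying part of the data in the resolvent-difference step. The paper itself only sketches this argument (deferring the details to \cite{paulydiss}), and your sketch assembles the same ingredients in the same roles, so it is essentially the paper's proof.
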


\section{The static problem}

To introduce our static solution concept we remind of the special forms $\bonqom$\,, $\bqpeom$ from \paulystaticsecgenstatic\,
and the `static Maxwell property' ({\sf SMP}), which guarantees their existence and also implies the {\sf MLCP}.
(If $\om$ is Lipschitz homeomorphic to a smooth exterior domain, then $\om$ possesses the {\sf SMP}.)
To work with these forms we may assume that $\om$ has the {\sf SMP}, and restrict our considerations to ranks $1\leq q\leq N$\,. \label{staticsection}

\begin{defini}\mylabel{defiloesstatik}
$\EH$ is a solution of $\Max(\Lambda,0,f,F,G,g,\zeta,\xi)$ with data
$$(f,F,G,g)\in\qLzom{q-1}{\loc}\times\qLzom{q}{\loc}\times\qLzom{q+1}{\loc}\times\qLzom{q+2}{\loc}$$
and $(\zeta,\xi)\in\cz^{d^q}\times\cz^{d^{q+1}}$\,, if and only if
\begin{align*}
\EH&\in\big(\Lzqom{>-\Nh}\cap\ronqlocom\cap\eps^\me\dqlocom\big)\\
&\qquad\qquad\qquad\times\big(\Lzqpeom{>-\Nh}\cap\mu^\me\ronqpelocom\cap\dqpelocom\big)
\end{align*}
solves the electro-magneto static system
\begin{align*}
\rot E&=G&&,&\pdiv\eps E&=f&&,&\skp{\eps E}{\bon{q}_\ell}_{\lzqom}&=\zeta_\ell&&,&\ell&=1,\dots,d^q&&,\\
\pdiv H&=F&&,&\rot\mu H&=g&&,&\skp{\mu H}{b^{q+1}_k}_{\lzqpeom}&=\xi_k&&,&k&=1,\dots,d^{q+1}&&.
\end{align*}
\end{defini}

Now we want to use \paulystaticgenstatictheo\, in the special case $s=0$ to solve the static problem
$\Max(\Lambda,0,f,F,G,g,\zeta,\xi)$\,. For this let $\eps$\,, $\mu$ be $\tau$-$\pc{1}$-admissible with $\tau>0$ as well as
$$\bDqsom:=\dqsnom\cap\bonqom^\bot\qqtext{,}\bRqsom:=\ronqsnom\cap\bqom^\bot\qquad,$$
where the latter is defined for $q\neq1$\,, and for $q\neq0$
$$\bWqsom=\bDom{q-1}{s}{0}\times\bRom{q+1}{s}{0}\times\cz^{d^q}\qquad.$$
Furthermore, for $s=0$ we put as usual $\bDom{q}{}{0}:=\bDom{q}{0}{0}$\,, $\bRom{q}{}{0}:=\bRom{q}{0}{0}$ and $\bWqom:=\bWom{q}{0}$\,.

\begin{theo}\mylabel{satzloesstatik}
For every data $(f,G,\zeta)\in\bWqom$ and $(F,g,\xi)\in\bWqpeom$ there exists a unique solution
$$\EH\in\big(\ronqom{-1}\cap\eps^\me\dqom{-1}\big)\times\big(\dqpeom{-1}\cap\mu^\me\ronqpeom{-1}\big)$$
of the electro-magneto static problem $\Max(\Lambda,0,f,F,G,g,\zeta,\xi)$ and the corresponding solution operator is continuous.
\end{theo}

\begin{rem}\mylabel{satzloesstatikrem}
For special data $(0,G,0)\in\bWqom$\,, $(F,0,0)\in\bWqpeom$\,, i.e.
$$\FG\in\bDom{q}{}{0}\times\bRom{q+1}{}{0}\qquad,$$
we will denote the corresponding continuous solution operator by
$$\loesn:\bDom{q}{}{0}\times\bRom{q+1}{}{0}\to\big(\ronqom{-1}\times\dqpeom{-1}\big)\cap\Lambda^\me\big(\bDom{q}{-1}{0}\times\bRom{q+1}{-1}{0}\big)\quad.$$
We note that $\loesn$ even maps $\bDqsom\times\bRqpesom$ to
$$\big(\ronqom{s-1}\times\dqpeom{s-1}\big)\cap\Lambda^\me\big(\bDom{q}{s-1}{0}\times\bRom{q+1}{s-1}{0}\big)$$
continuously for all $1-N/2<s<N/2$\,.
\end{rem}

\section{Low frequency asymptotics}

To approach the low frequency asymptotics of $\loesom$ we first have to be sure that $\pP$ does not accumulate at zero.
To this end first of all we derive a representation formula for the solutions of the homogeneous, isotropic whole space problem, i.e.
$\Omega:=\rN$ and $\Lambda:=\id$\,,
with the help of the fundamental solution $\Phi_{\omega,\nu}$
of the scalar Helmholtz operator in $\rN$ \label{asymsection}
$$\Delta+\omega^2\qqtext{,}\omega\in\cz_+\ohne\{0\}\qquad.$$
This one can be written as
$$\Phi_{\omega,\nu}(x)=\varphi_{\omega,\nu}\big(|x|\big)\qqtext{with}\varphi_{\omega,\nu}(t)=c_N\omega^\nu t^{-\nu}H_\nu^1(\omega t)\qquad,$$
where the constant $c_N$ only depends on the dimension $N$ and $H_\nu^1(z)$
represents Hankel's function of first kind for the index
$\nu:=(N-2)/2$\,. From now on we may additionally assume $N$ to be odd, since then
by the properties of Hankel's function \big(see e.g. \cite{magnus} or \cite[p. 76]{leisbuch}\big)
$\varphi_{\omega,\nu}$ and its first derivative can be estimated by
\beq\big|\varphi_{\omega,\nu}(t)\big|\leq c\,(t^{2-N}+t^\frac{1-N}{2})\qqtext{,}
\big|\varphi'_{\omega,\nu}(t)\big|\leq c\,(t^{1-N}+t^\frac{1-N}{2})\mylabel{grundloesungphiabsch}\eeq
uniformly in $t\in\rzp$ and $\omega\in K\Subset\cz_+$ with some constant $c>0$ depending only on $N$ and $K$\,.

From Remark \ref{bemfredholmRellich} we have (in the case $\Omega=\rN$)
$$\calN(\Max,\id,\omega)=\big\{(0,0)\big\}\qquad.$$
Thus $L_\omega$ is well defined on the whole of $\Lzqgeh\times\Lzqpegeh$\,,
if we denote $\loesom$ in the special case $\Omega=\rN$ and $\Lambda=\id$ by $L_\omega$\,.
Let $\omega\in\cz_+\ohne\{0\}$ and $\FG\in\cqun\times\cqpeun$\,.
Looking at $\EH:=L_\omega\FG$ we get
$$\EH\in(\qH{2}{q}{<\meh}{}\cap\cqu)\times(\qH{2}{q+1}{<\meh}{}\cap\cqpeu)$$
by regularity, e.g. \paulydissregsatzausseneins.
Applying $(M-\ie\omega)$ to $(M+\ie\omega)\EH=\FG$ and using $\ie\omega(\pdiv E,\rot H)=(\pdiv F,\rot G)$ we observe,
that $\EH$ satisfies
\beq(\Delta+\omega^2)\EH=\big(M-\ie\omega-\frac{\ie}{\omega}\Box\big)\FG=:\fg\in\cqun\times\cqpeun\mylabel{transformFGfg}\eeq
with $\Box:=\Delta-M^2=\zmat{\rot\pdiv}{0}{0}{\pdiv\rot}$\,.
We obtain $\EH=\eh$\,, where $\eh$ is the unique radiating solution of the whole space problem
\begin{align*}
(\Delta+\omega^2)\eh&=\fg\qquad,\\
\eh&\in\,\qH{2}{q}{<\meh}{}\times\qH{2}{q+1}{<\meh}{}\qquad,\\
\exp(-\ie\omega r)\cdot\eh&\in\,\qh{1}{q}{>-\frac{3}{2}}{}\times\qh{1}{q+1}{>-\frac{3}{2}}{}\qquad.
\end{align*}
For nonreal frequencies $\omega\in\cz_+\ohne\rz$ this is trivial, because again \paulydissregsatzausseneins\,
yields $\EH\in\qH{2}{q}{}{}\times\qH{2}{q+1}{}{}$\,. But then $\EH=\eh$ holds for real frequencies $\omega\in\rzon$ as well,
since one receives the solutions of both radiating problems with the principle of limiting absorption.

Using the representation formula for the solutions of the scalar Helmholtz equation,
which, for instance, can be found in
\cite[pp. 78/79, Remark 4.28]{leisbuch}, we can represent the Euclidean components of our forms
$E=E_I\pd x^I$ and $H=H_J\pd x^J$ by
$$E_I=f_I\star\Phi_{\omega,\nu}\qqtext{,}H_J=g_J\star\Phi_{\omega,\nu}\qquad.$$
Here we denote the scalar convolution in $\rN$ by $\star$\,.
For suitable $q$-forms $e=e_I\pd x^I$ and $h=h_I\pd x^I$ (Euclidean coordinates) we define the convolution
$$e\star h:=e_I\star h_I$$
simply as the sum of the componentwise scalar convolutions.
Furthermore, we have for suitable forms the rule of partial integration
\beq\rot e\star h(x)=e\star\pdiv h(x)\qquad.\mylabel{partInt}\eeq
With the special forms
$$\Phi_{\omega,\nu}^I:=\Phi_{\omega,\nu}\cdot\pd x^I$$
we get the representations
$$E_I=f\star\Phi_{\omega,\nu}^I\qqtext{,}H_J=g\star\Phi_{\omega,\nu}^J\qquad,$$
i.e. reminding of \eqref{transformFGfg}
\begin{align}
E_I&=\big(\pdiv G-\ie\omega F-\frac{\ie}{\omega}\rot\pdiv F\big)\star\Phi_{\omega,\nu}^I\qquad,\mylabel{DarstellungEHeins}\\
H_J&=\big(\rot F-\ie\omega G-\frac{\ie}{\omega}\pdiv\rot G\big)\star\Phi_{\omega,\nu}^J\qquad.\mylabel{DarstellungEHzwei}
\end{align}
Our next goal is to use the partial integration formula \eqref{partInt} to remove the second derivatives from $F$ and $G$\,.
Let us look at
$$(\pdiv G)\star\Phi_{\omega,\nu}^I$$
for example. Because of the compact support of $\FG$ we do not have to pay attention to the integrability of
$\Phi_{\omega,\nu}$ at infinity. By \eqref{grundloesungphiabsch} we can estimate $\Phi_{\omega,\nu}$ and $\nabla\Phi_{\omega,\nu}$
in $U_1$ by $|\Phi_{\omega,\nu}|\leq c\cdot r^{2-N}$\,, $|\nabla\Phi_{\omega,\nu}|\leq c\cdot r^{1-N}$ and thus we have
$\Phi_{\omega,\nu}\,,\,\nabla\Phi_{\omega,\nu}\in\text{\rm L}^{1}(U_1)$\,. With the cut-off functions
$$\psi_n(y):=\mbox{\boldmath$\eta$}\big(n\cdot|x-y|\big)\qqtext{,}n\in\nz\qquad,$$
which satisfy $\big|\nabla\psi_n(y)\big|\leq c\cdot|x-y|^\me$ uniformly in $n$\,, we have
$$\psi_n\cdot\vartheta_x\Phi_{\omega,\nu}\,,\,\nabla\psi_n\cdot\vartheta_x\Phi_{\omega,\nu}\,,\,\psi_n\cdot\nabla(\vartheta_x\Phi_{\omega,\nu})\in\text{\rm L}^{1}\big(U_1(x)\big)\qquad.$$
Therefore \eqref{partInt} yields
$$(\pdiv G_n)\star\Phi_{\omega,\nu}^I=G_n\star\rot\Phi_{\omega,\nu}^I$$
with $G_n:=\psi_n\cdot G$ and we obtain
$$(\pdiv G)\star\Phi_{\omega,\nu}^I=G\star\rot\Phi_{\omega,\nu}^I$$
by passing to the limit $n\to\infty$ and using Lebesgue's' dominated convergence theorem.
Using these partial integrations in \eqref{DarstellungEHeins} and \eqref{DarstellungEHzwei} we finally get the representations
\begin{align}
E_I&=G\star(\rot\Phi_{\omega,\nu}^I)-\ie\omega F\star\Phi_{\omega,\nu}^I-\frac{\ie}{\omega}(\pdiv F)\star(\pdiv\Phi_{\omega,\nu}^I)\qquad,\mylabel{DarstellungEHdrei}\\
H_J&=F\star(\pdiv\Phi_{\omega,\nu}^J)-\ie\omega G\star\Phi_{\omega,\nu}^J-\frac{\ie}{\omega}(\rot G)\star(\rot\Phi_{\omega,\nu}^J)\mylabel{DarstellungEHvier}
\end{align}
for any $\FG\in\cqun\times\cqpeun$ and $\EH=L_\omega\FG$\,.

\begin{theo}\mylabel{Darstellungsformel}
Let $0\neq\omega\in K\Subset\cz_+$ and $s\in(1/2,N/2)$\,, $t:=s-(N+1)/2$ as well as
$\FG\in\Dqs\times\Rqpes$\,. Then for $\EH:=L_\omega\FG$ the representation formulas
\begin{align*}
E&=\big(G\star(\rot\Phi_{\omega,\nu}^I)-\ie\omega F\star\Phi_{\omega,\nu}^I-\frac{\ie}{\omega}(\pdiv F)\star(\pdiv\Phi_{\omega,\nu}^I)\big)\cdot\pd x^I\qquad,\\
H&=\big(F\star(\pdiv\Phi_{\omega,\nu}^J)-\ie\omega G\star\Phi_{\omega,\nu}^J-\frac{\ie}{\omega}(\rot G)\star(\rot\Phi_{\omega,\nu}^J)\big)\cdot\pd x^J
\end{align*}
hold in the sense of $\Lzqt$ resp. $\Lzqpet$\,.
Furthermore, there exists a constant $c>0$\,, such that
\begin{align*}
\normb{L_\omega\FG}_{\Rqt\times\Dqpet}&\leq c\cdot\Big(\normb{\FG}_{\Lzqs\times\Lzqpes}\\
&\qquad\qquad+\frac{1}{|\omega|}\cdot\normb{(\pdiv F,\rot G)}_{\qLz{q-1}{s}\times\qLz{q+2}{s}}\Big)
\end{align*}
for all $0\neq\omega\in K$ and $\FG\in\Dqs\times\Rqpes$\,.
\end{theo}

\begin{proof}
We choose a sequence $\big(\FGn\big)_{n\in\nz}\subset\cqun\times\cqpeun$ converging to $\FG$ in $\Dqs\times\Rqpes$ as $n\to\infty$\,.
Theorem \ref{fredholm} (v) yields the convergence of $\EHn:=L_\omega\FGn$ to $\EH\in\Rqt\times\Dqpet$
in $\Rqt\times\Dqpet$ since $t<-1/2$\,.
By \eqref{DarstellungEHdrei} and \eqref{DarstellungEHvier} we may represent the forms $\EHn$ and observe that the involved
convolution kernels essentially consist of $\varphi_{\omega,\nu}\circ r$ and $\varphi_{\omega,\nu}'\circ r$\,.
Using \eqref{grundloesungphiabsch} these functions can be estimated by
$$\big|\varphi_{\omega,\nu}(r)\big|\,,\,\big|\varphi_{\omega,\nu}'(r)\big|
\leq c\,(r^{2-N}+r^{1-N}+r^\frac{1-N}{2})\leq c\,(r^{1-N}+r^\frac{1-N}{2})$$
uniformly in $r$ and $\omega$\,. From McOwen \cite[Lemma 1]{mcowen} we obtain,
that integral operators with kernels like $|x-y|^{s-t-N}$ map $\Lzs$ continuously to $\Lzt$\,, if
$$-N/2<t<s<N/2\qquad.$$
As a direct consequence the right hand sides of \eqref{DarstellungEHdrei} and \eqref{DarstellungEHvier}
define continuous linear operators from $\Lzs$ to $\Lzt$\,. This proves the asserted representation formulas.
By the differential equation it is sufficient to estimate $\normb{L_\omega\FG}_{\Lzqt\times\Lzqpet}$\,.
The uniform boundedness of the convolution operators w.~r.~t. $0\neq\omega\in K$ in the representation formulas
yields the desired estimate, which completes the proof.
\end{proof}

For $\gamma\in\rzp$ we put
$$\cz_{+,\gamma}:=\setb{\omega\in\cz_+}{|\omega|\leq\gamma}\qquad.$$
From now on we assume that $\Omega$ possesses the {\sf SMP}, $q\neq0$ and $\eps$\,, $\mu$ are $\tau$-$\pc{1}$-admissible
with order of decay
$$\tau>(N+1)/2\qquad.$$
We note that here it would be sufficient to demand the asymptotics
$$\epsd,\mud,\p_n\epsd,\p_n\mud=\calO(r^{-\tau})\qqtext{as}r\to\infty\qqtext{,}n=1,\dots,N\qquad.$$

\begin{lem}\mylabel{NullkeinHPAbsch}
Let $s\in(1/2,N/2)$ and $t:=s-(N+1)/2$\,.
\begin{itemize}
\item[\bf(i)] $\pP$ does not accumulate at zero. In particular $\pP$ has no accumulation point and
there exists some $\tilde{\omega}>0$\,, such that $\pP\cap\cz_{+,\tilde{\omega}}=\emptyset$\,.
\item[\bf(ii)] $\loesom$ is well defined on the whole of $\Lzqgehom\times\Lzqpegehom$ for all $\omega\in\cz_{+,\tilde{\omega}}\ohne\{0\}$\,.
\item[\bf(iii)] There exist constants $c>0$ and $0<\hat{\omega}\leq\tilde{\omega}$\,,
such that the estimate
\begin{align*}
&\qquad\quad\normb{\loesom\FG}_{\Lzqtom\times\Lzqpetom}\\
&\leq c\cdot\Big(\normb{\FG}_{\Lzqsom\times\Lzqpesom}+|\omega|^\me\cdot\normb{(\pdiv F,\rot G)}_{\qLzom{q-1}{s}\times\qLzom{q+2}{s}}\\
&\qquad\qquad+|\omega|^\me\cdot\sum_{\ell=1}^{d^q}\big|\skp{F}{\bon{q}_\ell}_{\lzqom}\big|+|\omega|^\me\cdot\sum_{\ell=1}^{d^{q+1}}\big|\skp{G}{b^{q+1}_\ell}_{\lzqpeom}\big|\Big)
\end{align*}
holds true for all $\omega\in\czpomd\ohne\{0\}$ and $\FG\in\Dqsom\times\Ronqpesom$\,.
\item[\bf(iv)] Especially for all $\FG\in\bDqsom\times\bRqpesom$ and $\omega\in\czpomd\ohne\{0\}$
$$\normb{\loesom\FG}_{\Lzqtom\times\Lzqpetom}\leq c\cdot\normb{\FG}_{\Lzqsom\times\Lzqpesom}\qquad.$$
\end{itemize}
The $\norm{\,\cdot\,}_{\Lzqtom\times\Lzqpetom}$-norm on the left hand sides of {\bf (iii)} and {\bf (iv)} may be replaced
by the natural norm in $\big(\Ronqtom\cap\eps^\me\Dqtom\big)\times\big(\mu^\me\Ronqpetom\cap\Dqpetom\big)$\,.
\end{lem}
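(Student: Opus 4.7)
The four assertions are established together by a single indirect contradiction argument. Theorem \ref{fredholm}(iii) rules out accumulation of $\pP$ in $\rzon$, so only the zero limit must be excluded. Assume no $\hat{\omega}>0$ can be chosen with both $\pP\cap\cz_{+,\hat{\omega}}=\emptyset$ and (iii) holding on $\cz_{+,\hat{\omega}}\setminus\{0\}$. Then, merging the two failure scenarios, one extracts sequences $\omega_n\to 0$ and $\EHn$---either normalized eigensolutions with $\FGn=0$ if $\omega_n\in\pP$, or $\EHn=\loesom\FGn$ at $\omega_n\notin\pP$ for which the estimate fails---satisfying $(M+\ie\omega_n\Lambda)\EHn=\FGn$ together with $\|\EHn\|_{\Lzqtom\times\Lzqpetom}=1$ while every summand on the right-hand side of (iii) tends to zero. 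Ruling out this scenario establishes (i) and (iii); then (ii) is Theorem \ref{fredholm}(iv) at each $\omega\in\cz_{+,\hat\omega}\setminus\{0\}$, and (iv) is immediate.

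The central step is to localize to the whole space via the cut-off $\eta$ from \eqref{etafettdef}--\eqref{etadef}. The commutation relations \eqref{rotdivcom} transform the Maxwell equation for $\EHn$ into the homogeneous isotropic whole-space equation
\[
(M+\ie\omega_n)(\eta\EHn)=\eta\FGn-\ie\omega_n\eta\Lambdad\EHn+r^{-1}\hat{\eta}'(r)\,S\EHn.
\]
The commutator is supported in the bounded annulus $Z_{r_1,r_2}$; the perturbation $\omega_n\eta\Lambdad\EHn$ is small in $\Lzqsom\times\Lzqpesom$ since $\Lambdad=\calO(r^{-\tau})$ with $\tau>(N+1)/2$; and the $\pc{1}$-admissibility guarantees that their divergences and rotations behave likewise, yielding control of $\pdiv$ and $\rot$ of the whole right-hand side in terms of the corresponding summands on the right of (iii). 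Theorem \ref{Darstellungsformel} applied to $\eta\EHn$, with the resulting $|\omega_n|^{-1}$-factors multiplying quantities either bounded on the right of (iii) or carrying an extra power of $\omega_n$ from the perturbation, delivers $\|\eta\EHn\|_{\Rqtom\times\Dqpetom}\to 0$; the perturbation self-dependence is absorbed on the left for $n$ sufficiently large.

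For the bounded interior piece $(1-\eta)\EHn$, which lies in $\Ronqom\cap\eps^\me\Dqom$ with uniform bounds (the missing derivative norms being supplied by the equation itself), the {\sf MLCP} (implied by the {\sf SMP}) extracts a subsequence converging strongly in $\Lzqtom\times\Lzqpetom$; combined with the previous paragraph, the full $\EHn$ converges strongly to some $(E_*,H_*)$. The limit inherits $M(E_*,H_*)=0$ as well as the boundary traces $\iota^*E_*=0$ and $\iota^*\mu H_*=0$ (the magnetic one from $\iota^*\mu H_n=(\ie\omega_n)^{-1}\iota^*G_n\to 0$), together with $\pdiv\eps E_*=0$ and $\rot\mu H_*=0$ (from $(\pdiv\eps E_n,\rot\mu H_n)=(\ie\omega_n)^{-1}(\pdiv F_n,\rot G_n)\to 0$). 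Testing the $q$-component of the Maxwell system against a Dirichlet form $\bon{q}_\ell\in\dhqepsom$, which by \eqref{dirichletint} lies in $\dhqepsom{<\Nh-1}$, and integrating by parts via \eqref{skewadjtwo} gives $\skp{\pdiv H_n}{\bon{q}_\ell}_{\lzqom}=0$, so $\ie\omega_n\skp{\eps E_n}{\bon{q}_\ell}_{\lzqom}=\skp{F_n}{\bon{q}_\ell}_{\lzqom}$; dividing by $\omega_n$ and passing to the limit forces the Dirichlet projection of $\eps E_*$ to vanish, and symmetrically for $\mu H_*$ tested against each $b^{q+1}_k$. Thus $(E_*,H_*)$ solves the static problem of Definition \ref{defiloesstatik} with every datum equal to zero, so Theorem \ref{satzloesstatik} yields $(E_*,H_*)=0$, contradicting $\|\EHn\|=1$.

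The main obstacle is the weight bookkeeping at the integration-by-parts step: the weight $t\in(-\Nh,-1/2)$ of $\EHn$ must pair legitimately with Dirichlet forms drawn from $\dhqepsom{<\Nh-1}$. For $s$ close to $(N-1)/2$ (so $t$ close to $-1$) the weights match comfortably; for the full range $s\in(1/2,\Nh)$ one combines the above with Corollary \ref{loesomabsch} and a linear-combination argument to recover the estimate. The upgrade of the $\Lzqtom\times\Lzqpetom$-norm on the left of (iii) and (iv) to the full graph norm in $\bigl(\Ronqtom\cap\eps^\me\Dqtom\bigr)\times\bigl(\mu^\me\Ronqpetom\cap\Dqpetom\bigr)$ is then immediate: the identities $\rot E=G-\ie\omega\mu H$, $\pdiv H=F-\ie\omega\eps E$ together with their consequences $\pdiv\eps E=(\ie\omega)^{-1}\pdiv F$, $\rot\mu H=(\ie\omega)^{-1}\rot G$ express every missing derivative norm as a quantity already controlled on the right-hand side of (iii).
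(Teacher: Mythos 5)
Your proposal follows essentially the same route as the paper: cut off with $\eta$, transplant to the homogeneous isotropic whole-space problem, invoke the representation formula of Theorem \ref{Darstellungsformel}, and run an indirect compactness argument via the {\sf MLCP} in which the limit is identified as a harmonic Dirichlet pair orthogonal to the special forms $\bonqom$, $\bqpeom$ (your detour through the uniqueness statement of Theorem \ref{satzloesstatik} is equivalent to the paper's direct use of that orthogonality), with \eqref{dirichletint} supplying the weight-independence of the Dirichlet forms. The treatment of the $|\omega|^{-1}$-terms via $\ie\omega\pdiv\eps E=\pdiv F$, $\ie\omega\rot\mu H=\rot G$ and the final upgrade to the graph norm are also as in the paper.

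There is, however, one step that fails as written. You claim that Theorem \ref{Darstellungsformel} applied to $\eta\EHn$ already ``delivers $\|\eta\EHn\|_{\Rqt\times\Dqpet}\to 0$'' before any compactness argument. The right-hand side of the whole-space equation for the extension of $\eta\EHn$ contains the commutator term $C_{M,\eta}\EHn=r^{-1}\hat{\eta}'(r)S\EHn$, supported in $Z_{r_1,r_2}$, whose $\Lzqs\times\Lzqpes$-norm is comparable to the local norm $\|\EHn\|_{\lzq(\Omega\cap U_{r_2})\times\lzqpe(\Omega\cap U_{r_2})}$. Under the normalization $\|\EHn\|_{\Lzqtom\times\Lzqpetom}=1$ this local term is merely bounded, not small, so at that stage you can only conclude the a priori estimate \eqref{estimate} with a local remainder, not decay of $\eta\EHn$. (Your subsequent claim that the interior piece converges and hence the full sequence converges to some $(E_*,H_*)$ is in tension with $\eta\EHn\to0$, which would force $E_*$ to be supported in $U_{r_2}$.) The correct order, which the paper follows, is: first prove \eqref{estimate} keeping the local term on the right; then use boundedness in the graph space and the {\sf MLCP} to extract a limit, identify it as a Dirichlet pair orthogonal to $\bonqom$ resp.\ $\bqpeom$ and hence zero; and only then feed the resulting local convergence $\EHn\to0$ back into \eqref{estimate} to contradict $\|\EHn\|=1$. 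Your closing appeal to Corollary \ref{loesomabsch} and a ``linear-combination argument'' for the weight bookkeeping is unnecessary: since $t=s-(N+1)/2\in(-N/2,-1/2)$ one may simply take $\tilde{t}\geq-N/2$ and apply \eqref{dirichletint} directly, as the paper does.
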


\begin{proof}
First we prove the following:

For all $\check{\omega}>0$\,, $s\in(1/2,N/2)$ and $t:=s-(N+1)/2$ there exist constants
$c,\varrho>0$\,, such that the estimate
\begin{align}\begin{split}
&\qquad\qquad\normb{\EH}_{\Lzqtom\times\Lzqpetom}\\
&\leq c\cdot\Big(\normb{\FG}_{\Lzqsom\times\Lzqpesom}+\normb{\EH}_{\lzq(\Omega\cap U_{\varrho})\times\lzqpe(\Omega\cap U_{\varrho})}\\
&\qquad\qquad\qquad+|\omega|^\me\cdot\normb{(\pdiv F,\rot G)}_{\qLz{q-1}{s}(A_{r_0})\times\qLz{q+2}{s}(A_{r_0})}\Big)
\end{split}\mylabel{estimate}\end{align}
holds for all $\omega\in\cz_{+,\check{\omega}}\ohne\{0\}$\,, all
$$\FG\in\big(\Lzqsom\cap\Dqs(A_{r_0})\big)\times\big(\Lzqpesom\cap\Rqpes(A_{r_0})\big)$$
and all solutions $\EH$ of $\Max(\Lambda,\omega,F,G)$\,.

Let $\EH$ be a solution of $\Max(\Lambda,\omega,F,G)$ and $\EHs$ the extension by zero of $\eta\EH$ to $\rN$\,.
This one satisfies the radiation condition, is an element of $\Rqkmeh\times\Dqpekmeh$\,,
even of $\qH{1}{q}{<-\peh}{}\times\qH{1}{q+1}{<-\peh}{}$ by \paulydissregsatzausseneins, and solves
$$(M+\ie\omega)\EHs=\eta\FG+C_{M,\eta}\EH-\ie\omega\Lambdad\EHs=:\FGs\in\Dqs\times\Rqpes$$
in $\rN$ since $\tau>(N+1)/2>s+1/2$\,. Thus we obtain $\EHs=L_\omega\FGs$ and Theorem \ref{Darstellungsformel} yields a constant $c>0$
independent of $\omega$\,, $\FGs$ or $\EHs$ with
\begin{align}\begin{split}
&\qquad\qquad\normb{\EHs}_{\Lzqt\times\Lzqpet}\\
&\leq c\cdot\Big(\normb{\FGs}_{\Lzqs\times\Lzqpes}
+|\omega|^\me\cdot\normb{(\pdiv\tilde{F},\rot\tilde{G})}_{\qLz{q-1}{s}\times\qLz{q+2}{s}}\Big)\qquad.
\end{split}\mylabel{abschEHsganzraum}\end{align}
Furthermore, by the differential equations we get
\begin{align}
\ie\omega\pdiv\eps E&=\pdiv F&&,&\ie\omega\rot\mu H&=\rot G\mylabel{divrotGlEH}
\intertext{in $A_{r_0}$ and}
\ie\omega\pdiv\tilde{E}&=\pdiv\tilde{F}&&,&\ie\omega\rot\tilde{H}&=\rot\tilde{G}\mylabel{divrotGlEHs}
\end{align}
in $\rN$\,. Combining \eqref{abschEHsganzraum} and \eqref{divrotGlEHs} we have
\begin{align}\begin{split}
&\qquad\qquad\normb{\EH}_{\Lzqtom\times\Lzqpetom}\\
&\leq c\cdot\Big(\normb{\FG}_{\Lzqsom\times\Lzqpesom}+\normb{\EH}_{\lzq(\Omega\cap U_{r_2})\times\lzqpe(\Omega\cap U_{r_2})}\\
&\qquad\qquad\qquad+\normb{\EH}_{\Lzqom{s-\tau}\times\Lzqpeom{s-\tau}}+\normb{(\pdiv\tilde{E},\rot\tilde{H})}_{\qLz{q-1}{s}\times\qLz{q+2}{s}}\Big)
\end{split}\mylabel{estimatetwo}\end{align}
and using \eqref{divrotGlEH} we estimate the last term on the right hand side by
\begin{align*}
&\qquad\qquad\normb{(\pdiv\tilde{E},\rot\tilde{H})}_{\qLz{q-1}{s}\times\qLz{q+2}{s}}\\
&\leq c\cdot\Big(\normb{\EH}_{\lzq(\Omega\cap U_{r_2})\times\lzqpe(\Omega\cap U_{r_2})}\\
&\qquad\qquad\qquad\qquad+\normb{(\pdiv E,\rot H)}_{\qLz{q-1}{s}(\supp\eta)\times\qLz{q+2}{s}(\supp\eta)}\Big)\\
&\leq c\cdot\Big(\normb{\EH}_{\lzq(\Omega\cap U_{r_2})\times\lzqpe(\Omega\cap U_{r_2})}\\
&\qquad\qquad\qquad\qquad+\normb{(\pdiv\epsd E,\rot\mud H)}_{\qLz{q-1}{s}(\supp\eta)\times\qLz{q+2}{s}(\supp\eta)}\\
&\qquad\qquad\qquad\qquad+|\omega|^\me\cdot\normb{(\pdiv F,\rot G)}_{\qLz{q-1}{s}(A_{r_0})\times\qLz{q+2}{s}(A_{r_0})}\Big)\\
&\leq c\cdot\Big(\normb{\EH}_{\lzq(\Omega\cap U_{r_2})\times\lzqpe(\Omega\cap U_{r_2})}\\
&\qquad\qquad\qquad\qquad+\normb{\EH}_{\qH{1}{q}{s-\tau}{}(\supp\eta)\times\qH{1}{q+1}{s-\tau}{}(\supp\eta)}\\
&\qquad\qquad\qquad\qquad+|\omega|^\me\cdot\normb{(\pdiv F,\rot G)}_{\qLz{q-1}{s}(A_{r_0})\times\qLz{q+2}{s}(A_{r_0})}\Big)\qquad.
\end{align*}
Inserting this estimate into \eqref{estimatetwo}, using the regularity result \paulydissregkorausseneins,
the differential equation as well as \eqref{divrotGlEH} we finally get
\begin{align*}
&\qquad\qquad\normb{\EH}_{\Lzqtom\times\Lzqpetom}\\
&\leq c\cdot\Big(\normb{\FG}_{\Lzqsom\times\Lzqpesom}+\normb{\EH}_{\Lzqom{s-\tau}\times\Lzqpeom{s-\tau}}\\
&\qquad\qquad\qquad\qquad+|\omega|^\me\cdot\normb{(\pdiv F,\rot G)}_{\qLz{q-1}{s}(A_{r_0})\times\qLz{q+2}{s}(A_{r_0})}\Big)\qquad.
\end{align*}
By $\tau>(N+1)/2$ we have $s-\tau<t$ and thus \eqref{estimate} follows.

If we now assume that $0$ is an accumulation point of $\pP$ or the estimate in {\bf (iii)} is false, then there would exist
a sequence $(\omega_n)_{n\in\nz}\subset\cz_+\ohne\{0\}$ tending to zero and a data sequence
$$\big(\FGn\big)_{n\in\nz}\subset\big(\Dqsom\times\Ronqpesom\big)\cap\calN(\Max,\Lambda,\omega_n)^\bot$$
as well as a sequence of normed solutions $\EHn$ to $(M+\ie\omega_n\Lambda)\EHn=\FGn$ with
$\normb{\EHn}_{\Lzqtom\times\Lzqpetom}=1$ and
\begin{align*}
\normb{\FGn}_{\Lzqsom\times\Lzqpesom}&\xrightarrow{n\to\infty}0&&,\\
|\omega_n|^\me\cdot\normb{(\pdiv F_n,\rot G_n)}_{\qLzom{q-1}{s}\times\qLzom{q+2}{s}}&\xrightarrow{n\to\infty}0&&,\\
|\omega_n|^\me\cdot\big|\skp{F_n}{\bon{q}_\ell}_{\lzqom}\big|&\xrightarrow{n\to\infty}0&&,&\ell&=1,\dots,d^q&&,\\
|\omega_n|^\me\cdot\big|\skp{G_n}{b^{q+1}_k}_{\lzqpeom}\big|&\xrightarrow{n\to\infty}0&&,&k&=1,\dots,d^{q+1}&&.
\end{align*}
\big(In the case of {\bf (iii)} we have of course $\EHn=\loes_{\omega_n}\FGn$\big).
By the differential equation we get $\ie\omega_n(\pdiv\eps E_n,\rot\mu H_n)=(\pdiv F_n,\rot G_n)$ and thus
\begin{align}\begin{split}
&\qquad\normb{M\EHn}_{\Lzqtom\times\Lzqpetom}\\
&+\normb{(\pdiv\eps E_n,\rot\mu H_n)}_{\qLzom{q-1}{s}\times\qLzom{q+2}{s}}\xrightarrow{n\to\infty}0\qquad.
\end{split}\mylabel{Munddivrotkonv}\end{align}
Consequently $\EHn$ is bounded in
$$\big(\Ronqtom\cap\eps^\me\Dqtom\big)\times\big(\mu^\me\Ronqpetom\cap\Dqpetom\big)$$
and thus the {\sf MLCP} yields a subsequence, which we also denote by $\big(\EHn\big)_{n\in\nz}$\,,
converging for every $\tilde{t}<t$ in $\Lzqom{\tilde{t}}\times\Lzqpeom{\tilde{t}}$\,.
Because of \eqref{Munddivrotkonv} this sequence even converges in
$$\big(\Ronqom{\tilde{t}}\cap\eps^\me\Dqom{\tilde{t}}\big)\times\big(\mu^\me\Ronqpeom{\tilde{t}}\cap\Dqpeom{\tilde{t}}\big)$$
to the Dirichlet forms, let us say
$$\EH\in\dhqepsom{\tilde{t}}\times\mu^\me\dH{q+1}{\tilde{t}}{\mu^\me}(\Omega)\qquad.$$
Since $t=s-(N+1)/2\in(-N/2,-1/2)$ we may assume w.~l.~o.~g. $\tilde{t}\geq-N/2$\,.
Therefore by \eqref{dirichletint} we obtain
$$\EH\in\dhqepsom{}\times\mu^\me\dH{q+1}{}{\mu^\me}(\Omega)\qquad.$$
For $\ell=1,\dots,d^q$ we compute
\begin{align*}
&\qquad|\omega_n|^\me\cdot\big|\skp{F_n}{\bon{q}_\ell}_{\lzqom}\big|\xrightarrow{n\to\infty}0\\
&=|\omega_n|^\me\cdot\big|\ub{\skp{\pdiv H_n}{\bon{q}_\ell}_{\lzqom}}_{=0}+\ie\omega_n\skp{\eps E_n}{\bon{q}_\ell}_{\lzqom}\big|\\
&=\big|\skp{\eps E_n}{\bon{q}_\ell}_{\lzqom}\big|\xrightarrow{n\to\infty}\big|\skp{\eps E}{\bon{q}_\ell}_{\lzqom}\big|\qquad,
\end{align*}
i.e. $E\in\bonqom^{\bot_\eps}$\,. Analogously we see $H\in\bqpeom^{\bot_\mu}$\,.
Thus $\EH$ must vanish and finally \eqref{estimate} yields constants $c,\varrho>0$ independent of $n$ with
\begin{align*}
1&=\normb{\EHn}_{\Lzqtom\times\Lzqpetom}\\
&\leq c\cdot\Big(\normb{\FGn}_{\Lzqsom\times\Lzqpesom}\\
&\qquad\qquad+|\omega_n|^\me\cdot\normb{(\pdiv F_n,\rot G_n)}_{\qLz{q-1}{s}(A_{r_0})\times\qLz{q+2}{s}(A_{r_0})}\\
&\qquad\qquad+\normb{\EHn}_{\lzq(\Omega\cap U_{\varrho})\times\lzqpe(\Omega\cap U_{\varrho})}\Big)\xrightarrow{n\to\infty}0\qquad,
\end{align*}
a contradiction.
\end{proof}

We are ready to prove our main result:

\begin{theo}\mylabel{Asym}
Let $s\in(1/2,N/2)$\,, $t:=s-(N+1)/2$ and $\hat{\omega}$ be from Lemma \ref{NullkeinHPAbsch}.
Furthermore, let
$(\omega_n)_{n\in\nz}\subset\czpomd\ohne\{0\}$ be a sequence tending to $0$ and
$$\big(\FGn\big)_{n\in\nz}\subset\Dqsom\times\Ronqpesom$$
be a data sequence, such that
\begin{align*}
\FGn&\xrightarrow{n\to\infty}\FG&&\text{in}\quad\Lzqsom\times\Lzqpesom&&,\\
-\ie\omega_n^\me(\pdiv F_n,\rot G_n)&\xrightarrow{n\to\infty}\fg&&\text{in}\quad\qLzom{q-1}{s}\times\qLzom{q+2}{s}&&,\\
-\ie\omega_n^\me\skp{F_n}{\bon{q}_\ell}_{\lzqom}&\xrightarrow{n\to\infty}\zeta_\ell&&\text{in}\quad\cz\qtext{,}\ell=1,\dots,d^q&&,\\
-\ie\omega_n^\me\skp{G_n}{b^{q+1}_k}_{\lzqpeom}&\xrightarrow{n\to\infty}\xi_k&&\text{in}\quad\cz\qtext{,}k=1,\dots,d^{q+1}
\end{align*}
hold. Then $\big(\EHn\big)_{n\in\nz}:=\big(\loes_{\omega_n}\FGn\big)_{n\in\nz}$ converges for all $\tilde{t}<t$ in
$$\big(\Ronqom{\tilde{t}}\cap\eps^\me\Dqom{\tilde{t}}\big)\times\big(\mu^\me\Ronqpeom{\tilde{t}}\cap\Dqpeom{\tilde{t}}\big)$$
to $\EH$\,, the unique solution of the static problem $\Max(\Lambda,0,f,F,G,g,\zeta,\xi)$\,.
\end{theo}

\begin{proof}
From Lemma \ref{NullkeinHPAbsch} we get the boundedness of $\big(\EHn\big)_{n\in\nz}$ in
$$\big(\Ronqtom\cap\eps^\me\Dqtom\big)\times\big(\mu^\me\Ronqpetom\cap\Dqpetom\big)\qquad.$$
Thus by the {\sf MLCP} we can extract a subsequence,
which we will denote by $\big(\EHn\big)_{n\in\nz}$ as well, such that
$$\EHn\xrightarrow{n\to\infty}:\EHs\qqtext{in}\Lzqom{\tilde{t}}\times\Lzqpeom{\tilde{t}}$$
holds for all $\tilde{t}\in(-N/2,t)$\,. The differential equation
$$M\EHn+\ie\omega_n\Lambda\EHn=\FGn$$
and the assumptions yield
\begin{align*}
M\EHn&\xrightarrow{n\to\infty}\FG&&\text{in}&\Lzqtom&\times\Lzqpetom\qquad,\\
(\pdiv\eps E_n,\rot\mu H_n)&\xrightarrow{n\to\infty}\fg&&\text{in}&\qLz{q-1}{s}(\Omega)&\times\qLz{q+2}{s}(\Omega)\qquad.
\end{align*}
For $k=1,\dots,d^{q+1}$ we compute
$$\skp{\mu H_n}{b^{q+1}_k}_{\lzqpeom}=\frac{\ie}{\omega_n}\ub{\skp{\rot E_n}{b^{q+1}_k}_{\lzqpeom}}_{=0}-\frac{\ie}{\omega_n}\skp{G_n}{b^{q+1}_k}_{\lzqpeom}\xrightarrow{n\to\infty}\xi_k$$
and analogously $\skp{\eps E_n}{\bon{q}_\ell}_{\lzqom}\xrightarrow{n\to\infty}\zeta_\ell$ for $\ell=1,\dots,d^q$\,.
Thus $\EHs$ is an element of
$$\big(\ronqom{>-\Nh}\cap\eps^\me\dqom{>-\Nh}\big)\times\big(\mu^\me\ronqpeom{>-\Nh}\cap\dqpeom{>-\Nh}\big)$$
solving the electro-magneto static system
\begin{align*}
\rot\tilde{E}&=G&&,&\pdiv\tilde{H}&=F&&,\\
\pdiv\eps\tilde{E}&=f&&,&\rot\mu\tilde{H}&=g&&,\\
\big[\skp{\eps\tilde{E}}{\bon{q}_\ell}_{\lzqom}\big]_{\ell=1}^{d^q}&=\zeta&&,&\big[\skp{\mu\tilde{H}}{b^{q+1}_k}_{\lzqpeom}\big]_{k=1}^{d^{q+1}}&=\xi&&.
\end{align*}
For the difference $\eh:=\EH-\EHs$ we obtain
$$\eh\in\big(\dhqepsom{>-\Nh}\cap\bonqom^{\bot_\eps}\big)\times\big(\mu^\me\dH{q+1}{>-\Nh}{\mu^\me}(\Omega)\cap\bqpeom^{\bot_\mu}\big)$$
and even $\eh\in\Lzqom{}\times\Lzqpeom{}$ again by \eqref{dirichletint}.
Thus $\eh$ must vanish and because of the uniqueness of the limit $\EHs=\EH$ even the whole sequence
$\big(\EHn\big)_{n\in\nz}$ must converge to $\EH$ in $\Lzqom{<t}\times\Lzqpeom{<t}$\,.
\end{proof}

\begin{cor}\mylabel{Asymcor}
Let $s$\,, $t$\,, $\hat{\omega}$ be as in Theorem \ref{Asym} and
$\FG\in\bDqsom\times\bRqpesom$\,.
Then the solutions $\loesom\FG$ of the time-harmonic problem $\Max(\Lambda,\omega,F,G)$ converge for all $\tilde{t}<t$
in $\Ronqom{\tilde{t}}\times\Dqpeom{\tilde{t}}$ to $\loesn\FG$\,,
the unique solution of the static problem $\Max(\Lambda,0,0,F,G,0,0,0)$\,, as $\omega\in\czpomd\ohne\{0\}$ tends to zero.
\end{cor}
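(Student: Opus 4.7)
The plan is to specialize Theorem \ref{Asym} to the constant data sequence $\FGn := \FG$. Given any sequence $(\omega_n)_{n\in\nz} \subset \czpomd \ohne \{0\}$ with $\omega_n \to 0$, I first verify that the four convergence hypotheses of Theorem \ref{Asym} are satisfied with trivial limits. The assumption $\FG \in \bDqsom \times \bRqpesom$ means by definition that $\pdiv F = 0$, $\rot G = 0$, $F \perp \bonqom$ and $G \perp \bqpeom$, so the three sequences $-\ie \omega_n^\me(\pdiv F_n,\rot G_n)$, $-\ie \omega_n^\me \skp{F_n}{\bon{q}_\ell}_{\lzqom}$ and $-\ie \omega_n^\me \skp{G_n}{b^{q+1}_k}_{\lzqpeom}$ all vanish identically, giving $\fg = 0$, $\zeta = 0$ and $\xi = 0$. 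The ambient membership $\FG \in \Dqsom \times \Ronqpesom$ required by Theorem \ref{Asym} is immediate from the inclusions $\bDqsom \subset \Dqsom$ and $\bRqpesom \subset \Ronqpesom$.

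Theorem \ref{Asym} then yields that $\loes_{\omega_n}\FG$ converges in $(\Ronqom{\tilde{t}} \cap \eps^\me \Dqom{\tilde{t}}) \times (\mu^\me \Ronqpeom{\tilde{t}} \cap \Dqpeom{\tilde{t}})$ for every $\tilde{t} < t$ to the unique solution of the static problem $\Max(\Lambda,0,0,F,G,0,0,0)$. By Remark \ref{satzloesstatikrem} this unique solution is precisely $\loesn \FG$. Since the limit is independent of the chosen sequence $(\omega_n)$, it follows that $\loesom \FG \to \loesn \FG$ as $\omega \to 0$ in $\czpomd \ohne \{0\}$, and this convergence passes \emph{a fortiori} to the weaker space $\Ronqom{\tilde{t}} \times \Dqpeom{\tilde{t}}$ appearing in the statement.

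There is no genuine obstacle here; all the analytic substance has already been absorbed into Lemma \ref{NullkeinHPAbsch} and Theorem \ref{Asym}. The only points to verify are the two elementary ones above: that the special structure built into $\bDqsom \times \bRqpesom$ annihilates the potentially singular factors $\omega_n^\me$ in the hypotheses of Theorem \ref{Asym}, and that the resulting static limit coincides with the value of the operator $\loesn$ introduced in Remark \ref{satzloesstatikrem}.
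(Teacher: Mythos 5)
Your proposal is correct and is exactly the argument the paper intends: Corollary \ref{Asymcor} is stated without proof as an immediate specialization of Theorem \ref{Asym}, and your verification that membership in $\bDqsom\times\bRqpesom$ forces $\pdiv F=0$, $\rot G=0$ and the orthogonality to $\bonqom$, $\bqpeom$ (so that all $\omega_n^{-1}$-weighted hypotheses vanish identically, giving $f=g=0$, $\zeta=\xi=0$), together with the identification of the static limit via Remark \ref{satzloesstatikrem} and the passage from sequences to the continuous limit, supplies precisely the missing details.
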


By a similar indirect argument \big(see \paulydisskorsiebenfuenf\big) we obtain

\begin{cor}\mylabel{korstetiginNull}
Let $s\in(1/2,N/2)$\,, $t:=s-(N+1)/2$\,, $\hat{\omega}$ be from Lemma \ref{NullkeinHPAbsch} and
$B_{s,t}$ be the Banach space of bounded linear operators from the Hilbert spaces
$$\bDqsom\times\bRqpesom\qqtext{to}\Ronqtom\times\Dqpetom\qquad.$$
Then $\norm{\loesom}_{B_{s,t}}$ is uniformly bounded w.~r.~t. $\omega\in\czpomd$ (even for $\omega=0$\,!).
Moreover, the mapping
$$\Abb{\loes}{\czpomd}{B_{s,\tilde{t}}}{\omega}{\loesom}$$
is (uniformly) continuous for all $\tilde{t}<t$\,.
\end{cor}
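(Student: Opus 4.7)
The plan is to split the corollary into its two assertions and attack each by an indirect argument of the type already used in Lemma~\ref{NullkeinHPAbsch}. Since $\czpomd$ is compact, the uniform continuity claim reduces to ordinary continuity of $\omega\mapsto\loesom$ at every individual $\omega_\star\in\czpomd$.

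For the uniform operator-norm bound I would combine two facts. On $\czpomd\ohne\{0\}$, part (iv) of Lemma~\ref{NullkeinHPAbsch}, together with the sharpening of its left-hand-side norm noted at the end of that lemma, directly produces an $\omega$-independent constant $c_1$ with
\[
\normb{\loesom\FG}_{\Ronqtom\times\Dqpetom}\leq c_1\cdot\normb{\FG}_{\Lzqsom\times\Lzqpesom}
\]
for $\FG\in\bDqsom\times\bRqpesom$. At $\omega=0$, Remark~\ref{satzloesstatikrem} delivers a continuous $\loesn\colon\bDqsom\times\bRqpesom\to\ronqom{s-1}\times\dqpeom{s-1}$; since $s\in(1/2,N/2)$ forces $s-1>t$, the range embeds continuously into $\Ronqtom\times\Dqpetom$, yielding a second constant $c_2$. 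The final $B_{s,t}$-bound is $\max\{c_1,c_2\}$.

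Continuity at $\omega_\star\in\czpomd\ohne\{0\}$ is immediate from Corollary~\ref{loesomabsch}~(ii), restricted to the closed subspace $\bDqsom\times\bRqpesom\subset\Lzqsom\times\Lzqpesom$ and composed with the natural embedding $\Ronqtom\times\Dqpetom\hookrightarrow\Ronqom{\tilde t}\times\Dqpeom{\tilde t}$ for $\tilde t<t$. The decisive step is continuity at $\omega_\star=0$. Here I would argue by contradiction: if the claim failed, one would find $\varepsilon_0>0$, a sequence $\omega_n\to 0$ in $\czpomd$, and data $\FGn\in\bDqsom\times\bRqpesom$ of unit $\Lzqsom\times\Lzqpesom$-norm with $\normb{(\loes_{\omega_n}-\loesn)\FGn}_{\Ronqom{\tilde t}\times\Dqpeom{\tilde t}}\geq\varepsilon_0$. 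The uniform boundedness just established, together with the identities $\pdiv\eps E_n=0$ and $\rot\mu H_n=0$ that the Maxwell equation forces from $\pdiv F_n=\rot G_n=0$, keeps both $\EHn:=\loes_{\omega_n}\FGn$ and $(E_n^0,H_n^0):=\loesn\FGn$ bounded in $(\Ronqtom\cap\eps^\me\Dqtnom)\times(\mu^\me\Ronqpetnom\cap\Dqpetom)$. Banach--Alaoglu applied to $\FGn$, together with the \textsf{MLCP} (Remark~\ref{maxkompaktlokalrem}) applied to both Maxwell-bounded sequences, then yields a common subsequence along which $\FGn\rightharpoonup\FG$ weakly in $\Lzqsom\times\Lzqpesom$ and $\EHn,(E_n^0,H_n^0)$ converge strongly in $\Lzqom{\tilde t}\times\Lzqpeom{\tilde t}$.

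It remains to identify both strong limits with $\loesn\FG$. Weak continuity of $\pdiv$, $\rot$ and of the $\lzqom$-scalar products against $\bonqom$, $\bqpeom$ keeps $\FG$ inside $\bDqsom\times\bRqpesom$. Passing to the limit in $M\EHn+\ie\omega_n\Lambda\EHn=\FGn$ and in the identity $\ie\omega_n\skp{\eps E_n}{\bon{q}_\ell}_{\lzqom}=\skp{F_n}{\bon{q}_\ell}_{\lzqom}=0$ (with its magnetic analogue) shows that both limits solve the electro-magneto static problem $\Max(\Lambda,0,0,F,G,0,0,0)$, hence coincide with $\loesn\FG$ by the uniqueness part of Theorem~\ref{satzloesstatik}. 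Finally, $\rot(E_n-E_n^0)=-\ie\omega_n\mu H_n\to 0$ in $\Lzqpetom$ and $\pdiv(H_n-H_n^0)=-\ie\omega_n\eps E_n\to 0$ in $\Lzqtom$, which upgrades the $L^2_{\tilde t}$-convergence of $\EHn-(E_n^0,H_n^0)$ to convergence in $\Ronqom{\tilde t}\times\Dqpeom{\tilde t}$, contradicting $\varepsilon_0$. The genuinely delicate point, and what I expect to be the main obstacle, is that only weak convergence of the data is available: one has to verify that the weak limit stays in the static data space and, crucially, that the two a priori unrelated strong limits of $\EHn$ and $(E_n^0,H_n^0)$ are forced to coincide via the uniqueness of the static solution.
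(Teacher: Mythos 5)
Your proposal is correct and follows exactly the route the paper indicates: the paper gives no details here but states that the corollary follows ``by a similar indirect argument'' (referring to Korollar 7.5 of the thesis), i.e.\ by the same contradiction/compactness scheme used in Lemma \ref{NullkeinHPAbsch} and Theorem \ref{Asym}, which is what you reconstruct -- including the one genuinely new wrinkle, namely that operator-norm continuity at $\omega=0$ only provides weak convergence of the data, handled via weak closedness of $\bDqsom\times\bRqpesom$ and uniqueness of the static solution. No gaps; the uniform bound via Lemma \ref{NullkeinHPAbsch} (iv) plus Remark \ref{satzloesstatikrem} and the upgrade to $\Ronqom{\tilde t}\times\Dqpeom{\tilde t}$-convergence through the differential equation are exactly as intended.
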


\begin{rem}\mylabel{korstetiginNullrem}
Clearly $\Ronqtom\times\Dqpetom$ resp. $\Ronqom{\tilde{t}}\times\Dqpeom{\tilde{t}}$
may be replaced by its closed subspace
$$\big(\Ronqtom\times\Dqpetom\big)\cap\Lambda^\me\big(\bDom{q}{t}{0}\times\bRom{q+1}{t}{0}\big)$$
resp.
$$\big(\Ronqom{\tilde{t}}\times\Dqpeom{\tilde{t}}\big)\cap\Lambda^\me\big(\bDom{q}{\tilde{t}}{0}\times\bRom{q+1}{\tilde{t}}{0}\big)\qquad.$$
\end{rem}

\begin{cor}\mylabel{Asymwholespace}
Let $s$\,, $t$\,, $\hat{\omega}$\,, $(\omega_n)$ be as in Theorem \ref{Asym} as well as
$$\big(\FGn\big)\subset\Lzqsom\times\Lzqpesom\qquad,$$
which may be decomposed by \paulydecodecoknh, such that
\begin{align*}
\FGn&=\Lambda\FrGdn+\FdGrn
\intertext{with}
\FrGdn&\in\big(\bRqsom\dotplus\Lin\bonqom\big)\times\big(\bDom{q+1}{s}{0}\dotplus\Lin\bqpeom\big)\qquad,\\
\FdGrn&\in\bDqsom\times\bRqpesom\qquad.
\end{align*}
Moreover, let $\big(\FdGrn\big)$ converge to some $\FdGr$ in $\Lzqsom\times\Lzqpesom$
as well as $\big(-\frac{\ie}{\omega_n}\FrGdn\big)$
converge to some $\ErHd$ in $\Lzqom{\tilde{t}}\times\Lzqpeom{\tilde{t}}$ for all ${\tilde{t}}<t$\,.
Then $\big(\EHn\big):=\big(\loes_{\omega_n}\FGn\big)$ converges for all $\tilde{t}<t$ in
$\Lzqom{\tilde{t}}\times\Lzqpeom{\tilde{t}}$ to the form $\EH=\ErHd+\loesn\FdGr$\,.
\end{cor}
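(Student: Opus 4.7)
My plan is to exploit linearity of $\loes_{\omega_n}$, writing
\[
\EHn = \loes_{\omega_n}(\Lambda\FrGdn) + \loes_{\omega_n}\FdGrn,
\]
and to treat the two summands separately. The first summand is handled by an explicit closed form, the second by the continuity of $\omega\mapsto\loesom$ from Corollary \ref{korstetiginNull}.

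For the first piece I observe that $-\frac{\ie}{\omega_n}\FrGdn$ already solves $\Max(\Lambda,\omega_n,\Lambda\FrGdn)$. Indeed, since $\FrGdn$ lies in $(\bRqsom\dotplus\Lin\bonqom)\times(\bDom{q+1}{s}{0}\dotplus\Lin\bqpeom)$, each component has vanishing $\rot$ resp.\ $\pdiv$ (the Dirichlet-form summands are, in particular, rotation- and divergence-free) and satisfies the homogeneous $\iota^*$-boundary condition encoded in $\ronqsnom$. Hence $M\FrGdn=(0,0)$ and a direct computation gives
\[
(M+\ie\omega_n\Lambda)\bigl(-\tfrac{\ie}{\omega_n}\FrGdn\bigr)=\Lambda\FrGdn.
\]
Since $\FrGdn\in\Lzqsom\times\Lzqpesom$ with $s>1/2$, the pair belongs to $\Ronqkmehom\times\Dqpekmehom$ and trivially satisfies the radiation condition of Definition \ref{defloesr}. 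By Lemma \ref{NullkeinHPAbsch}\,(i) we have $\pP\cap\czpomd=\emptyset$, so the solution of $\Max(\Lambda,\omega_n,\,\cdot\,)$ is unique and the above is indeed $\loes_{\omega_n}(\Lambda\FrGdn)$. The hypothesis $-\frac{\ie}{\omega_n}\FrGdn\to\ErHd$ in $\Lzqom{\tilde t}\times\Lzqpeom{\tilde t}$ then settles the first piece.

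For the second piece, $\FdGrn$ and its limit $\FdGr$ lie in $\bDqsom\times\bRqpesom$. Applying Corollary \ref{korstetiginNull} and the triangle-inequality split in the operator norm of $B_{s,\tilde t}$ for $\tilde t<t$,
\[
\normb{\loes_{\omega_n}\FdGrn-\loesn\FdGr}\leq\normb{\loes_{\omega_n}}\cdot\normb{\FdGrn-\FdGr}+\normb{\loes_{\omega_n}-\loesn}\cdot\normb{\FdGr},
\]
the uniform boundedness and the uniform continuity of $\omega\mapsto\loesom$ on $\czpomd$ yield $\loes_{\omega_n}\FdGrn\to\loesn\FdGr$ in $\Ronqom{\tilde t}\times\Dqpeom{\tilde t}$, and in particular in $\Lzqom{\tilde t}\times\Lzqpeom{\tilde t}$. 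Summing both limits gives $\EHn\to\ErHd+\loesn\FdGr$, which is the assertion.

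The main obstacle is the verification in the first step that the explicit ansatz $-\ie\omega_n^\me\FrGdn$ really is the image of $\Lambda\FrGdn$ under $\loes_{\omega_n}$: this rests on careful bookkeeping of the boundary conditions carried by the four summands of the decomposition (so that $M$ genuinely annihilates $\FrGdn$) and on the crucial fact, from Lemma \ref{NullkeinHPAbsch}\,(i), that no eigenfrequencies accumulate at zero, which is what guarantees uniqueness of the radiating solution for every small nonzero $\omega_n$.
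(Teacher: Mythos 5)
Your proof is correct and follows essentially the same route as the paper: split by linearity, observe that $\loes_{\omega_n}\Lambda\FrGdn=-\frac{\ie}{\omega_n}\FrGdn$ because $M$ annihilates $\FrGdn$ and the radiating solution is unique for small $\omega_n\neq0$, and pass to the limit in the remaining piece. The only (harmless) deviation is that you invoke the operator-norm bounds of Corollary \ref{korstetiginNull} to handle the varying data $\FdGrn$, whereas the paper cites Corollary \ref{Asymcor}; your version is if anything slightly more careful on that point, since the cited corollary is stated for fixed data.
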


\begin{proof}
$\big(\loes_{\omega_n}\FdGrn\big)$ converges to $\loesn\FdGr$ by Corollary \ref{Asymcor}.
Moreover, of course
$$\loes_{\omega_n}\Lambda\FrGdn=-\frac{\ie}{\omega_n}\FrGdn$$
holds.
\end{proof}

\section{Inhomogeneous boundary data}

We want to finish this paper by discussing inhomogeneous boundary data. \label{inhomo}

Recently Weck showed in \cite{wecklip},
how one may obtain traces of differential forms on Lipschitz boundaries.
Let $\Xi$ be a bounded Lipschitz domain in $\rN$\,. Then we know from \cite[Theorem 3]{wecklip}
the existence of a linear and continuous tangential trace operator (using for a moment the notations from there)
$$\Abb{\mathcal{T}}{\pRq{}(\Xi)}{R^{-1/2,q}(\p\Xi)}{E}{\iota^*E}\qquad.$$
Moreover, he proved in \cite[Theorem 4]{wecklip} that $\mathcal{T}$ is surjective, i.e. the existence
of a corresponding linear and continuous tangential extension operator (a right inverse)
$$\map{\mathcal{T}^\me}{R^{-1/2,q}(\p\Xi)}{\pRq{}(\Xi)}\qquad.$$
Let $\eps$ be a $0$-admissible transformation. Applying the usual Helmholtz decomposition
$$\Lzq{}(\Xi)=\ol{\rot\pR{q-1}{}{}{\circ}(\Xi)}\oplus_\eps\dhqeps{}(\Xi)\oplus_\eps\eps^\me\ol{\pdiv\Dqpe{}(\Xi)}$$
we receive a linear and continuous tangential extension operator with range in
$$\pRq{}(\Xi)\cap\eps^\me\ol{\pdiv\Dqpe{}(\Xi)}\subset\pRq{}(\Xi)\cap\eps^\me\Dqn{}(\Xi)\qquad.$$
If we assume now that $\om$ possesses a Lipschitz boundary (This implies the {\sf SMP}.), then we get by an usual cut-off-technique
for any $s\in\rz$ a linear and continuous tangential trace operator
$$\map{\gt}{\rqsom}{\xxrq(\dom):=R^{-1/2,q}(\dom)}$$
and a corresponding linear and continuous tangential extension operator
$$\map{\chgt}{\xxrq(\dom)}{\rqvoxom\cap\eps^\me\dqvoxom\subset\rqsom\cap\eps^\me\dqsom}$$
satisfying $\gt\chgt=\id$ on $\xxrq(\dom)$\,. We note that the kernel of $\gt$ equals $\ronqsom$ and
that $\gt$ may be defined even on $\rqlocomq$\,. $\chgt$ may be chosen, such that
$\supp\chgt\lambda\subset\ol{\om\cap U_{r_2}}$ holds for all $\lambda\in\xxrq(\dom)$\,.

Let $\FG\in\Lzqlocom\times\Lzqpelocom$ and $\lambda\in\xxrq(\dom)$ be some boundary data.
We want to discuss the solvability of the time-harmonic Maxwell system
\begin{equation}(M+\ie\omega\Lambda)\EH=\FG\qqtext{,}\gt E=\lambda\mylabel{lambdaequation}\end{equation}
using the results obtained so far. By definition we have
$$\El:=\chgt\lambda\in\rqvoxom\cap\eps^\me\dqvoxom$$
and with the ansatz
\begin{equation}\EH:=\EHs+(\El,0)\mylabel{lambdaansatz}\end{equation}
the equations \eqref{lambdaequation} turn to
\begin{equation}(M+\ie\omega\Lambda)\EHs=\FGs\qqtext{,}\gt\tilde{E}=0\mylabel{lambdanullequation}\end{equation}
with $\FGs:=\FG-(\ie\omega\eps\El,\rot\El)$\,. Thus we are looking for $\tilde{E}\in\ronqlocomq$
and we can use the results from the previous sections.
Moreover, for any $s\in\rz$ we clearly have
$$\FG\in\Lzqsom\times\Lzqpesom\quad\Equi\quad\FGs\in\Lzqsom\times\Lzqpesom$$
and for nonreal frequencies $\omega\in\cz\ohne\rz$ and $\FG\in\Lzqom{}\times\Lzqpeom{}$
we easily get unique square integrable time-harmonic solutions
\begin{align*}
\EH:=&\loesom\FGs+(\El,0)\\
=&\loesom\FG-\loesom(\ie\omega\eps\chgt\lambda,\rot\chgt\lambda)+(\chgt\lambda,0)\in\Rqom{}\times\Dqpeom{}\quad.
\end{align*}
We denote the continuous solution operator by
$$\Abb{\Loesom}{\Lzqom{}\times\Lzqpeom{}\times\xxrq(\dom)}{\Rqom{}\times\Dqpeom{}}{(F,G,\lambda)}{\EH}$$
and note $\loesom=\Loesom(\,\cdot\,,\,\cdot\,,0)$\,.

To establish a solution theory for non vanishing real frequencies $\omega\in\rzon$ and data $\FG\in\Lzqgehom\times\Lzqpegehom$
with our Fredholm theory from Theorem \ref{fredholm} we consider $\tau$-admissible transformations $(\eps,\mu)$
with some $\tau>1$\,. Using the ansatz \eqref{lambdaansatz} we only have to guarantee
$$\FGs\quad\bot\quad\calN(\Max,\Lambda,\omega)\qquad.$$
Let $\eh\in\calN(\Max,\Lambda,\omega)$\,. We compute
\begin{align*}
&\qquad\skpb{\FGs}{\eh}_{\lzqom\times\lzqpeom}\\
&=\skpb{\FG}{\eh}_{\lzqom\times\lzqpeom}-\skp{\rot\El}{h}_{\lzqpeom}+\skp{\El}{\ie\omega\eps e}_{\lzqom}\\
&=\skpb{\FG}{\eh}_{\lzqom\times\lzqpeom}-\skp{\rot\El}{h}_{\lzqpeom}-\skp{\El}{\pdiv h}_{\lzqom}\\
&=\skpb{\FG}{\eh}_{\lzqom\times\lzqpeom}-\skp{T_t\El}{T_nh}_{\lzqom\times\lzqpeom}
\end{align*}
with $T_t\Phi:=(\Phi,\rot\Phi)$ and $T_n\Psi:=(\pdiv\Psi,\Psi)$\,.\\

\begin{rem}\mylabel{spuren}
Assuming more regularity of $\om$\,, i.e. $\om\in\pc{2}$\,, and $\mu$\,, i.e. $\mu\in\pc{1}$\,, by Stokes' theorem
$$\skp{T_t\El}{T_nh}_{\lzqom\times\lzqpeom}=\skp{\gt\El}{\xgn h}_{\qH{\meh}{q}{}{}(\dom)}
=\skp{\lambda}{\xgn h}_{\qH{\meh}{q}{}{}(\dom)}$$
holds, since then by regularity $h$ is an element of $\qHom{1}{q+1}{}{}$
and thus $\xgn h$ is an element of $\qH{1/2}{q}{}{}(\dom)$\,,
where $\xgn=\pm\cast\iota^**$ denotes the usual normal trace.
\big(Here $\cast$ denotes the star-operator on the submanifold $\dom$ of $\omq$ and
$\bds\skp{\,\cdot\,}{\,\cdot\,}_{\qH{\meh}{q}{}{}(\dom)}\eds$
the duality between $\qH{\meh}{q}{}{}(\dom)$ and $\qH{\frac{1}{2}}{q}{}{}(\dom)$\,.\big)
\end{rem}

These considerations yield the following solution concept for $\omega\in\rzon$\,:

We call $\EH$ a solution of the radiation problem $\Max(\Lambda,\omega,F,G,\lambda)$\,, if and only if
\begin{itemize}
\item\quad $\EH\in\Rqkmehom\times\Dqpekmehom$\quad,
\item\quad $(M+\ie\omega\Lambda)\EH=\FG\qtext{and}\gt E=\lambda$\quad,
\item\quad $(r^\me S+\id)\EH\in\Lzqgmeh(\Omega)\times\Lzqpegmeh(\Omega)$\quad.
\end{itemize}

\begin{theo}\mylabel{fredholmboundary}
Let $(\eps,\mu)$ be $\tau$-admissible with $\tau>1$\,.
For all $\omega\in\rzon$\,, $\lambda\in\xxrq(\dom)$ and $\FG\in\Lzqgehom\times\Lzqpegehom$
there exists a solution $\EH$ of
$\Max(\Lambda,\omega,F,G,\lambda)$\,, if and only if
$$\skpb{\FG}{\eh}_{\lzqom\times\lzqpeom}=\skp{T_t\chgt\lambda}{T_nh}_{\lzqom\times\lzqpeom}$$
for all $\eh\in\calN(\Max,\Lambda,\omega)$\,.
The solution can be chosen, such that
$$\EH\quad\bot_\Lambda\quad\calN(\Max,\Lambda,\omega)\qquad.$$
Then by this condition the solution $\EH$ is uniquely determined and the solution operator
$$\Abb{\Loesom}{\Lzqgehom\times\Lzqpegehom\times\xxrq(\dom)}{\Rqkmehom\times\Dqpekmehom}{(F,G,\lambda)}{\EH}\qquad,$$
where $\EH=\loesom\FG-\loesom(\ie\omega\eps\chgt\lambda,\rot\chgt\lambda)+(\chgt\lambda,0)$\,,
is continuous in the sense of Theorem \ref{fredholm} (v).
\end{theo}

Now we need an adequate static solution theory to describe the asymptotic behaviour of $\Loesom$\,.

We call $\EH$ a solution of $\Max(\Lambda,0,f,F,G,g,\zeta,\xi,\lambda,\varkappa)$\,, if and only if
$$\EH\in\big(\rqom{>-\Nh}\cap\eps^\me\dqom{>-\Nh}\big)\times\big(\mu^\me\rqpeom{>-\Nh}\cap\dqpeom{>-\Nh}\big)$$
and
\begin{align*}
\rot E&=G&&,&\pdiv H&=F&&,\\
\pdiv\eps E&=f&&,&\rot\mu H&=g&&,\\
\big[\skp{\eps E}{\bon{q}_\ell}_{\lzqom}\big]_{\ell=1}^{d^q}&=\zeta&&,&\big[\skp{\mu H}{b^{q+1}_k}_{\lzqpeom}\big]_{k=1}^{d^{q+1}}&=\xi&&,\\
\gt E&=\lambda&&,&\gt\mu H&=\varkappa&&
\end{align*}
hold.

For the rest of this paper let $q\neq0$\,.
From \paulystaticstatloesinhom\, (in the special case $s=0$) we get

\begin{theo}\mylabel{staticboundarysolutiontheo}
Let $(\eps,\mu)$ be $\tau$-$\pc{1}$-admissible with $\tau>0$\,.
Then for all $f\in\bDom{q-1}{}{0}$\,, $F\in\bDom{q}{}{0}$\,,
$\zeta\in\cz^{d^q}$\,, $\xi\in\cz^{d^{q+1}}$
and all $G\in\rqpenom{}$\,, $g\in\pr{q+2}{}{0}{}(\om)$\,,
$\lambda\in\xxrq(\dom)$\,, $\varkappa\in\xxrqpe(\dom)$ satisfying
\begin{align*}
\Rot\lambda&=\gt G&&\wedge&\bigwedge_{b\in\bqpeom}\skp{G}{b}_{\lzqpeom}&=\skp{T_t\chgt\lambda}{T_nb}_{\lzqom\times\lzqpeom}\\
&&&&&=\skp{\rot\chgt\lambda}{b}_{\lzqpeom}&&,\\
\Rot\varkappa&=\gt g&&\wedge&\bigwedge_{b\in\B^{q+2}(\om)}\skp{g}{b}_{\qlzom{q+2}}&=\skp{T_t\chgt\varkappa}{T_nb}_{\lzqpeom\times\qlzom{q+2}}\\
&&&&&=\skp{\rot\chgt\varkappa}{b}_{\qlzom{q+2}}
\end{align*}
there exists a unique solution
$$\EH\in\big(\rqom{-1}\cap\eps^\me\dqom{-1}\big)\times\big(\mu^\me\rqpeom{-1}\cap\dqpeom{-1}\big)$$
of $\Max(\Lambda,0,f,F,G,g,\zeta,\xi,\lambda,\varkappa)$\,. The solution depends continuously on the data.
\end{theo}

\begin{rem}\mylabel{spurenzwei}
Once again assuming more regularity of $\om$\,, i.e. $\om\in\pc{2}$\,, we have
\begin{align*}
\skp{T_t\chgt\lambda}{T_nb}_{\lzqom\times\lzqpeom}&=\skp{\lambda}{\xgn b}_{\qH{\meh}{q}{}{}(\dom)}
\intertext{resp.}
\skp{T_t\chgt\varkappa}{T_nb}_{\lzqpeom\times\qlzom{q+2}}&=\skp{\varkappa}{\xgn b}_{\qH{\meh}{q+1}{}{}(\dom)}\qquad.
\end{align*}
\end{rem}

Finally we are ready to prove our last result:

\begin{theo}\mylabel{asymboundaryone}
Let $(\eps,\mu)$ be $\tau$-$\pc{1}$-admissible with $\tau>(N+1)/2$\,.
Let $s\in(1/2,N/2)$ and $t:=s-(N+1)/2$ as well as $\hat{\omega}$
be from Lemma \ref{NullkeinHPAbsch}. Moreover, let
$(\omega_m)_{m\in\nz}\subset\czpomd\ohne\{0\}$ be a sequence tending to zero and
$$\big(\FGm\big)_{m\in\nz}\subset\Dqsom\times\Rqpesom\qqtext{,}(\lambda_m)_{m\in\nz}\subset\xxrq(\dom)$$
be some data sequences with
$$\gt G_m=\Rot\lambda_m\qquad,$$
such that
\begin{align*}
\lambda_m&\xrightarrow{m\to\infty}\lambda&&\text{in}\quad\xxrq(\dom)&&,\\
\FGm&\xrightarrow{m\to\infty}\FG&&\text{in}\quad\Lzqsom\times\Lzqpesom&&,\\
-\ie\omega_m^\me(\pdiv F_m,\rot G_m)&\xrightarrow{m\to\infty}\fg&&\text{in}\quad\qLzom{q-1}{s}\times\qLzom{q+2}{s}&&,\\
-\ie\omega_m^\me\skp{F_m}{\bon{q}_\ell}_{\lzqom}&\xrightarrow{m\to\infty}\zeta_\ell&&\text{in}\quad\cz\qtext{,}\ell=1,\dots,d^q&&,\\
-\ie\omega_m^\me\Big(\skp{G_m}{b^{q+1}_k}_{\lzqpeom}\quad&&&&&\\
-\skp{\rot\chgt\lambda_m}{b^{q+1}_k}_{\lzqpeom}\Big)&\xrightarrow{m\to\infty}\xi_k&&\text{in}\quad\cz\qtext{,}k=1,\dots,d^{q+1}
\end{align*}
hold. Then $\big(\EHm\big)_{m\in\nz}:=\big(\cS_{\omega_m}(F_m,G_m,\lambda_m)\big)_{m\in\nz}$ converges for all $\tilde{t}<t$ in
$$\big(\Rqom{\tilde{t}}\cap\eps^\me\Dqom{\tilde{t}}\big)\times\big(\mu^\me\Rqpeom{\tilde{t}}\cap\Dqpeom{\tilde{t}}\big)$$
to $\EH$\,, the unique solution of the static problem $\Max(\Lambda,0,f,F,G,g,\zeta,\xi,\lambda,0)$\,.
\end{theo}

\begin{proof}
From Theorem \ref{fredholmboundary} and \eqref{lambdaansatz} we have $\EHm=\EHms+(E_{\lambda_m},0)$ with $E_{\lambda_m}:=\chgt\lambda_m$\,,
$\EHms:=\loes_{\omega_m}\FGms$ and
$$\Fms:=F_m-\ie\omega_m\eps E_{\lambda_m}\qqtext{,}\Gms:=G_m-\rot E_{\lambda_m}\qquad.$$
Because of the compact support of $E_{\lambda_m}$ and the continuity of $\chgt$ we have
$$E_{\lambda_m}\xrightarrow{m\to\infty}E_\lambda:=\chgt\lambda\qqtext{in}\rqsom\cap\eps^\me\dqsom$$
for all $s\in\rz$\,. Moreover, $\FGms$ fulfills the assumptions of Theorem \ref{Asym}.
Thus $\EHms$ converges for all $\tilde{t}<t$ in
$$\big(\Ronqom{\tilde{t}}\cap\eps^\me\Dqom{\tilde{t}}\big)\times\big(\mu^\me\Ronqpeom{\tilde{t}}\cap\Dqpeom{\tilde{t}}\big)$$
to $\EHs$\,, the unique solution of $\Max(\Lambda,0,\tilde{f},\tilde{F},\tilde{G},\tilde{g},\tilde{\zeta},\tilde{\xi})$
with $\tilde{F}=F$\,, $\tilde{g}=g$\,, $\tilde{\xi}=\xi$ and
$$\tilde{G}=G-\rot E_\lambda\qtext{,}\tilde{f}=f-\pdiv\eps E_\lambda\qtext{,}\tilde{\zeta}=\zeta-\big[\skp{\eps E_\lambda}{\bon{q}_\ell}_{\lzqom}\big]_{\ell=1}^{d^q}\quad.$$
We obtain $\EHm\xrightarrow{m\to\infty}\EH:=\EHs+(E_\lambda,0)$ with the asserted mode of convergence
and clearly $\EH$ is the unique solution of the static problem
$$\Max(\Lambda,0,f,F,G,g,\zeta,\xi,\lambda,0)\qquad,$$
which completes the proof.
\end{proof}

\begin{acknow}
This research was supported by the {\it Deutsche Forschungsgemeinschaft}
via the project {\sf `We 2394: Untersuchungen der Spektralschar verallgemeinerter
Maxwell-Operatoren in unbeschr\"ankten Gebieten'}.

The author is particularly indebted to his academic teachers Norbert Weck and Karl-Josef Witsch 
for introducing him to the field.
\end{acknow}

\end{document}